\DeclareMathOperator{\dcdot}{\cdot\cdot}
\DeclareMathOperator{\Seq}{\subseteq}
\newtheorem{theorem}{Theorem}
\newtheorem{lemma} [theorem] {Lemma}
\newtheorem{proposition}[theorem]{Proposition}
\theoremstyle{theorem}
\newtheorem{corollary}[theorem] {Corollary}
\theoremstyle{definition}
\theoremstyle{theorem}
\def\BState{\State\hskip-\ALG@thistlm}
\newcounter{claimCount}
\newenvironment{claim}{\medskip

    \noindent\refstepcounter{claimCount}\textbf{Claim~\arabic{claimCount}.}}{

    \medskip}
\newenvironment{claimproof}{\noindent\textit{Proof of Claim~\arabic{claimCount}.}}{\hfill\ensuremath{\qedsymbol} \tiny{Claim~\arabic{claimCount}}

    \medskip}
\theoremstyle{definition}
\newtheorem{defn}{Definition}[section]
\theoremstyle{remark}
\newtheorem*{remark}{Remark}
\theoremstyle{definition}
\theoremstyle{theorem}
\theoremstyle{definition}
\newtheorem*{notation}{Notation}
\begin{document}
\title{Confining the Robber on Cographs}	





\author{
Masood Masjoody\\
\texttt{mmasjood@sfu.ca}
}

\maketitle

\begin{abstract} 

In this paper, the notions of {\em trapping} and {\em confining} the robber on a graph are introduced. We present some structural necessary conditions for graphs $G$ not containing the path on $k$ vertices (referred to as $P_k$-free graphs) for some $k\ge 4$, so that $k-3$ cops do not have a strategy to capture or confine the robber on $G$. Utilizing such conditions, we show that for planar cographs and planar $P_5$-free graphs the confining cop number is at most one and two, respectively. It is also shown that the number of vertices of a connected cograph on which one cop does not have a strategy to confine the robber has a tight lower-bound of eight. We also explore the effects of twin operations- which are well known to provide a characterization of cographs- on the number of cops required to capture or confine the robber on cographs. We conclude by posing two conjectures concerning the confining cop number of $P_5$-free graphs and the smallest planar graph of confining cop number of three.\\

\noindent{\bf Keywords:} Cographs; Confining Cop Number; Game of Cops and Robbers; Trapping Cop Number; $P_k$-free Graph; Train-chasing Lemma\\

\noindent{\bf AMS subject classification:} 05C57, 91A46

\end{abstract}
\section{Introduction}\label{sec:intro}
For the definition of the game of cops and robbers and relevant basic definitions and terminology, see \cite{masjoody2020cops}. For other basic graph theoretic definitions see \cite{chartrand2019chromatic}. 

The game of cops and robbers on graphs with a forbidden induce subgraph was studied in \cite{CDM154}. The main results in \cite{CDM154} are summarized as follows:

\begin{theorem}\label{thm: joret}\cite{CDM154}\,
\begin{enumerate}
\item For a graph $H$, the class of $H$-free graphs is cop-bounded iff every component of $H$ is a path.
\item The class of $P_k$-free graphs ($k\ge 3$) is $(k-2)$-copwin.
\end{enumerate}
\end{theorem}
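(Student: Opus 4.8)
The plan is to prove the second claim by induction on $k$ and then deduce the first claim from it. The base case $k=3$ is immediate: a connected $P_3$-free graph is complete, since two non-adjacent vertices would be joined by a shortest path whose first three vertices induce a $P_3$, and complete graphs are cop-win. For the inductive step let $G$ be a connected $P_k$-free graph with $k\ge 4$. First I would normalise $G$ by repeatedly deleting a \emph{dominated} vertex, i.e.\ a vertex $u$ with $N[u]\subseteq N[w]$ for some $w\neq u$: this keeps the graph connected, preserves $P_k$-freeness (we pass to an induced subgraph), and leaves the cop number unchanged, since such a deletion is a retraction and any winning cop strategy on $G-u$ lifts to $G$ by treating the robber, whenever it sits on $u$, as though it were on $w$ (from $u$ the robber can only reach $N[u]\subseteq N[w]$, so it is caught within one extra move once a cop reaches $w$). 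If the normalisation ends at a single vertex then $c(G)=1\le k-2$; otherwise we may assume $G$ is connected, $P_k$-free, has no dominated vertex, and — since the case that $G$ is $P_{k-1}$-free is settled by the induction hypothesis — contains an induced $P_{k-1}$, which is then a longest induced path of $G$.

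The heart of the matter is the structural lemma I would aim to establish: such a $G$ has a vertex $v$ for which every connected component of $G-N[v]$ is $P_{k-1}$-free. Granting it, the strategy for $k-2$ cops is as follows. One cop occupies $v$ for the entire game, so that the robber can never sit on or step into $N[v]$ without being captured on the cops' next move; hence after its initial placement the robber is permanently confined to a single component $C$ of $G-N[v]$, never able to cross $N[v]$ into another component. Since $G[C]$ is $P_{k-1}$-free, the induction hypothesis gives $c(C)\le k-3$, so the other $k-3$ cops first walk into $C$ (they may pass freely through $N[v]$, being uncatchable) and then execute an optimal strategy on $G[C]$; here one uses the standard fact that once the robber is confined the cops may take finitely many moves to reach their chosen configuration. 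Altogether $1+(k-3)=k-2$ cops capture the robber.

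To prove the structural lemma I would argue by contradiction. If no such $v$ exists, then for every vertex $v$ there is an induced $P_{k-1}$ disjoint from $N[v]$. Fix one such induced path $Q$, apply the assumption to a vertex of $Q$ (or to a neighbour of an end of $Q$) to get a second induced $P_{k-1}$, say $Q'$, lying far from $Q$, and then splice a shortest path from $Q$ to $Q'$ in the connected graph $G$ with suitable sub-paths of $Q$ and of $Q'$ to exhibit an induced $P_k$, contradicting $P_k$-freeness. I expect the main obstacle to be precisely this splicing step: a shortest path joining two induced paths may send chords to either of them, so extracting a genuinely \emph{induced} $P_k$ from the concatenation is delicate, and this is where the ``no dominated vertex'' normalisation should be essential, excluding the chord-rich configurations. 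The normalisation is genuinely needed: the lemma fails for graphs with dominated vertices (for instance $(P_3\cup P_3)+(P_3\cup P_3)$), and one cannot instead bound the cop number by the domination number (the graph obtained from $K_n$ by attaching a pendant at every vertex is $P_5$-free with domination number $n$ but cop number $1$).

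Finally, for the first claim: if every component of $H$ is a path, put $N=|V(H)|+t-1$, where $t$ is the number of components of $H$; any induced $P_N$ contains $t$ vertex-disjoint, pairwise non-adjacent sub-paths of the right lengths (list the component lengths in order, separated by one skipped vertex each), so every $H$-free graph is $P_N$-free, and the second claim bounds its cop number by $N-2$. Conversely, if some component of $H$ is not a path, then $H$ contains either an induced $K_{1,3}$ (a degree-$\ge 3$ vertex of a tree component has three pairwise non-adjacent neighbours) or an induced cycle (the shortest cycle of $H$, which is chordless). In the former case every claw-free graph is $H$-free, and claw-free graphs are known to have unbounded cop number; in the latter case, writing $g$ for the length of that cycle, every graph of girth exceeding $\max(g,4)$ is $H$-free, and for every $d$ there are such graphs of minimum degree $d$, hence of cop number at least $d$. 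Either way the class of $H$-free graphs is not cop-bounded, which completes the dichotomy.
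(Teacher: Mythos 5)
The paper offers no proof of this statement at all --- it is quoted from \cite{CDM154} --- so the only meaningful comparison is with the argument that reference (and, in a strengthened form, the paper's own Train-chasing Lemma, Lemma \ref{lemma: train-chase-robber}) actually uses. Your part (a) is essentially the standard derivation and is fine: the reduction of $H$-freeness to $P_N$-freeness with $N=|V(H)|+t-1$ is correct, and the two unbounded families (large-girth graphs of large minimum degree for the induced-cycle case, claw-free graphs for the branching case) are exactly the ones used in \cite{CDM154}; just note that the unboundedness of the cop number of claw-free graphs is itself a nontrivial ingredient, proved there via line graphs, so your part (a) is not self-contained on that point. The serious problem is part (b), on which part (a) also depends.

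Your structural lemma --- that a connected $P_k$-free graph with no dominated vertex has a vertex $v$ for which every component of $G-N[v]$ is $P_{k-1}$-free --- is false, so the induction collapses. Let $A$ and $B$ each be the disjoint union of two copies of $C_k$ and let $G$ be the join of $A$ and $B$ (every vertex of $A$ adjacent to every vertex of $B$). In a join, any induced path $x_1\cdots x_m$ with $m\ge 4$ lies entirely in one part: $x_1$ and $x_m$ are non-adjacent, hence in the same part, and any $x_i$ in the opposite part would be adjacent to both, forcing $i=2$ and $i=m-1$ simultaneously. Since $C_k\cup C_k$ is $P_k$-free, so is $G$. One checks directly that $G$ has no dominated vertex: within one $C_k$ no closed neighbourhood contains another (for $k\ge 4$), a vertex never lies in the closed neighbourhood of a vertex from the other cycle of its own part, and neither part has a dominating vertex, which rules out domination across the two parts. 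Yet for every $v$, say $v\in A$, the copy of $C_k$ in $A$ not containing $v$ is a full component of $G-N[v]$ and contains an induced $P_{k-1}$. So the ``no dominated vertex'' normalisation does not exclude the join-type obstructions you were worried about; your example $(P_3\cup P_3)+(P_3\cup P_3)$ merely happens to have dominated vertices, while $(C_k\cup C_k)+(C_k\cup C_k)$ does not. The statement that does go through is adaptive rather than static: after one cop parks at $v_1$, a \emph{second} cop moves to a neighbour $v_2$ of $v_1$ that has a neighbour in the robber's current component, and so on, building an induced path $v_1,\dots,v_{k-2}$; a surviving robber vertex at distance two from $v_{k-2}$ in the remaining territory would extend this path to an induced $P_k$, so the robber ends up dominated by the cops' positions. (In the counterexample, the second cop simply steps into $B$ and thereby dominates all of $A$.) This is precisely the content of Lemma \ref{lemma: train-chase-robber} and Theorem \ref{thm: Pk-free-Masood}, and it is the idea your write-up is missing.
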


The results in \cite{CDM154} were extended \cite{masjoody2018CMS2,masjoody2020cops}, mainly through the introduction of the {\em Train-chasing Lemma} (Lemma \ref{lemma: train-chase-robber}), to the game of cops and robbers on graphs with a set of forbidden induced subgraphs.



\begin{defn}\cite{masjoody2020cops}
Let $G$ be a graph and $U$ be the set of all triples $(u,v,H)$ where $H$ is a connected subgraph of $G$, and $u,v\in V(H)$ with $d_H(u,v)\ge 2$. A {\em chasing function for} $G$ is a function $\theta$ mapping every triple $(u,v,H)\in U$ onto the neighbor of $u$ along a $(u,v)$-shortest path in $H$. 
\end{defn}

\begin{lemma}[\textbf{Train-chasing Lemma} \cite{masjoody2020cops}]\label{lemma: train-chase-robber}
Consider an instance of the game of cops and robber on a graph $G$. Let $\theta$ be a chasing function for $G$. Let $k\in \mathbb{N}$ and suppose on the cops' turn in step one there are $k$ cops $C_1,\dots,C_k$ in a vertex $v_1$ of the graph while the robber is located in a vertex $w_1$. Further, suppose the robber can and will play in such a way to survive the next $k$ steps of the game, regardless of how the cops $C_1,\dots, C_k$ play. Denote the following (generally not predetermined) robber's positions with $w_2,\dots,w_{k}$. Then, let $H_i$ ($i\in[1\dcdot k]$) and $v_i$ ($i\in[2\dcdot k]$) be defined recursively by the following relations:
\begin{itemize}
\item $H_1=G$;
\item $v_{i+1}=\theta(v_i,w_i,H_i)$ for $i\in[1\dcdot k]$;
\item $X_i=N_{H_i}(v_i)\setminus \{v_{i+1}\}$ for $i\in[1\dcdot k]$;
\item $H_{i+1}:$ the component of $v_1$ in $H_{i}-X_i$ for $i\in [1\dcdot k]$.
\end{itemize}
Then the following holds:
\begin{enumerate}
\item Every $H_i$ is an induced subgraph of $G$.
\item If $uv\in E(G)\setminus E(H_{k+1})$ such that $u\in V(H_{k+1})$, then $v\in\bigcup_{i=1}^{k}X_i$.
\item Vertices $v_1,\dots,v_{k+1}$, in that order, induce a path in $H_k$.
\item The cops can play such that on the cops' turn in step $k$  every $C_i$, $i\in[1\dcdot k]$, is located in vertex $v_i$.
\item Keeping every $C_i$ in $v_i$ for the rest of the game forces the robber to stay in $H_{k+1}$.
\end{enumerate}
\begin{center}
 \includegraphics[width=0.8\linewidth]{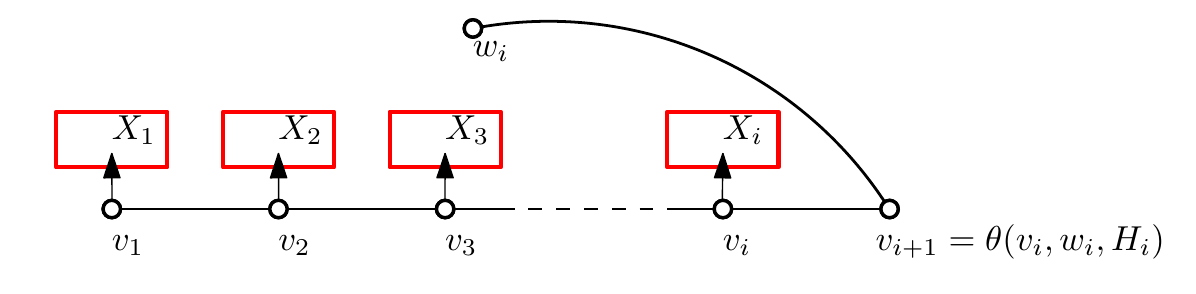}
 \captionof{figure}{Train-chasing the robber according to Lemma \ref{lemma: train-chase-robber} \cite{masjoody2020cops}}\label{pic: train-chase}
\end{center}
\end{lemma}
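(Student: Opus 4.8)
The plan is to establish all five conclusions at once, by induction on $i=1,\dots,k$, by making the cops follow an explicit ``train--building'' strategy and carrying the following invariant at stage $i$: on the cops' turn in step $i$, the cops $C_1,\dots,C_i$ occupy $v_1,\dots,v_i$ and the cops $C_{i+1},\dots,C_k$ all sit on $v_i$; $H_i$ is an induced subgraph of $G$ in which $v_1,\dots,v_{i+1}$ induce a path; the robber sits at $w_i\in V(H_i)$ with $d_{H_i}(v_i,w_i)\ge 2$; and every edge of $G$ with exactly one endpoint in $V(H_i)$ has the endpoint outside $V(H_i)$ lying in $X_1\cup\dots\cup X_{i-1}$. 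The distance clause is exactly what makes $v_{i+1}=\theta(v_i,w_i,H_i)$ a legitimate value of the chasing function, and since then $v_{i+1}\in N_{H_i}(v_i)\subseteq N_G(v_i)$, a cop on $v_i$ may legally step to $v_{i+1}$; that is how the train grows.

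For the base case $i=1$ we have $H_1=G$ and no deletion has occurred, so the only point is $d_G(v_1,w_1)\ge 2$: otherwise $w_1$ equals $v_1$ or is adjacent to it, and a cop on $v_1$ captures the robber on the first move, so the robber does not survive the next $k$ steps against that play --- which is precisely where we use that the robber survives against \emph{every} cop strategy, not just train--chasing. For the inductive step, assume the invariant at stage $i$. In step $i$ the cops advance $C_{i+1},\dots,C_k$ (all currently on $v_i$) to $v_{i+1}$ and leave $C_1,\dots,C_i$ in place; this secures the positional part of the invariant at stage $i+1$, and at $i=k$ it is Conclusion~4. To get $w_i\in V(H_{i+1})$, take a shortest $(v_i,w_i)$--path in $H_i$; its first edge is $v_iv_{i+1}$, and a standard exchange argument shows the remainder of the path meets no vertex of $X_i$ (a neighbour of $v_i$ distinct from $v_{i+1}$ occurring later would yield a strictly shorter $(v_i,w_i)$--path), so $w_i$ lies in the component of $H_i-X_i$ that contains $v_{i+1}$; as the deletion of $X_i$ leaves the path $v_1,\dots,v_{i+1}$ usable, this component also contains $v_1$, so it is $H_{i+1}$. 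The robber survives step $i$, so it moves to some $w_{i+1}$ on no cop's vertex; and by the edge--boundary clause any move out of $V(H_{i+1})$ would put the robber on a vertex of some $X_j$, adjacent to the occupied vertex $v_j$, hence captured on the next move --- so $w_{i+1}\in V(H_{i+1})$, and $d_{H_{i+1}}(v_{i+1},w_{i+1})\ge 2$ by the base--case argument run inside $H_{i+1}$. Finally, a component of an induced subgraph is induced, $v_{i+1}$ is non-adjacent to $v_1,\dots,v_{i-1}$ (else $v_1,\dots,v_i$ would have a chord), and the edge--boundary clause passes from $H_i$ to $H_{i+1}$ (a component of $H_i-X_i$); this gives the invariant at stage $i+1$.

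The conclusions now drop out. Conclusion~1 is the ``induced'' clause; Conclusion~3 is the ``path'' clause at $i=k$; Conclusion~4 was noted above; Conclusion~2 is the edge--boundary clause for $H_{k+1}$, verified exactly as in the inductive step. For Conclusion~5: by the invariant the robber is in $H_{k+1}$ once the train is finished, and if the cops hold $v_1,\dots,v_k$ forever then, by Conclusion~2, any step of the robber out of $H_{k+1}$ lands it on a vertex of some $X_i\subseteq N_G(v_i)$, adjacent to the stationary cop $C_i$, hence suicidal; so a surviving robber stays in $H_{k+1}$.

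The step I expect to be the real obstacle is the contraction $H_i\rightsquigarrow H_{i+1}$ inside the inductive step: one must make the shortest--path exchange argument (which both locates $w_i$ and forbids $w_{i+1}$ outside $H_{i+1}$) mesh with the ``survival forces distance $\ge 2$'' observation and, crucially, with the bookkeeping that deleting $X_i$ leaves $H_{i+1}$ a single component carrying $v_1$, all of $v_1,\dots,v_{i+1}$, and the robber's entire admissible territory --- i.e. that the recursion really does name the region the robber is trapped in. Secondary care is needed with the round/timing conventions (checking that ``the next $k$ steps'' is exactly enough for $C_k$ to walk $v_1\to v_2\to\cdots\to v_k$ while the robber's moves produce $w_2,\dots,w_k$, hence $v_2,\dots,v_{k+1}$) and with ruling out degenerate coincidences among the $v_j$, which the induced--path clause handles.
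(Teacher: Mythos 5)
The paper itself offers no proof of this lemma: it is quoted verbatim from \cite{masjoody2020cops}, so there is nothing in-text to compare your argument against. Taken on its own, your induction is the standard (and surely the intended) argument, and all of the individual mechanisms are right: survival forces $d_{H_i}(v_i,w_i)\ge 2$ so that $\theta$ is applicable; the shortest-path exchange puts $w_i$ in the component of $v_{i+1}$ after deleting $X_i$; the edge-boundary invariant plus the guards at $v_1,\dots,v_i$ pin the robber inside $H_{i+1}$; and the cop-positioning invariant delivers conclusions~4 and~5.

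There is, however, a genuine problem at exactly the step you flagged as the obstacle, namely your claim that ``the deletion of $X_i$ leaves the path $v_1,\dots,v_{i+1}$ usable,'' which you need in order to conclude that the component of $v_{i+1}$ in $H_i-X_i$ is the same as the component of $v_1$, i.e.\ is $H_{i+1}$. With the definition as printed, $X_i=N_{H_i}(v_i)\setminus\{v_{i+1}\}$ contains $v_{i-1}$ for every $i\ge 2$ (since $v_{i-1}v_i$ is an edge of the path in the induced subgraph $H_i$), so deleting $X_i$ severs the path; worse, already at $i=2$ one has $v_1\in X_2$, so ``the component of $v_1$ in $H_2-X_2$'' does not exist and the recursion, read literally, is ill-defined for $k\ge 2$ (and conclusion~3 could not hold). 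The statement only becomes consistent if one reads $X_i=N_{H_i}(v_i)\setminus\{v_{i-1},v_{i+1}\}$ (equivalently, deletes the neighbours of $v_i$ other than the two path-neighbours), and under that reading your assertion is true and the rest of your proof goes through; it also then gives the cleaner justification for the induced-path claim (in $H_{j+1}$ the only neighbours of $v_j$ are $v_{j-1}$ and $v_{j+1}$, so no later $v_\ell$ can be adjacent to $v_j$) and for the distinctness of the $v_j$, which your parenthetical about ``chords of $v_1,\dots,v_i$'' does not quite deliver. So: either you should have corrected the definition of $X_i$ before using it, or you should have flagged that the recursion as stated breaks down; silently asserting that the path survives the deletion is the one step of your write-up that is false as written.
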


In \cite{masjoody2020cops}, the Train-chasing Lemma was in particular used to characterize classes $\mathscr{F}$ of graphs such that $\mathscr{F}$-free graphs are cop-bounded, under the condition that there is a constant bounding the diameter of the components of elements of $\mathscr{F}$. The resultant characterization generalizes Theorem \ref{thm: joret}(a). It is worth mentioning that the following extension of Theorem \ref{thm: joret}(b) is also an immediate corollary of the Train-chasing Lemma. (See \cite{yang2015optimal} for the definition of the {\em one-active-cop} version of the game of cops and robbers.)

\begin{theorem}\label{thm: Pk-free-Masood}\cite{masjoody2020cops}
For $k\ge 3$, $k-2$ cops require no more than $k-1$ steps of the game to capture the robber on a $P_k$-free graph in the one-active-cop version of the game of Cops and Robbers. 
\end{theorem}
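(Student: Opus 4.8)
The plan is to run the Train-chasing Lemma with exactly $m:=k-2$ cops and then to show that the region to which the robber is confined after $k-2$ rounds is so small that one more round suffices for the capture.

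First, place all $k-2$ cops on a single vertex $v_1$ and let them follow the strategy furnished by part~(4) of Lemma~\ref{lemma: train-chase-robber} (and, as is easy to check, this strategy is admissible in the one-active-cop model). If the robber is captured during rounds $1,\dots,k-2$ we are done, since $k-2\le k-1$. Otherwise the robber survives those rounds against this play, so the construction of Lemma~\ref{lemma: train-chase-robber} applies with this $m$: we obtain the nested induced subgraphs $G=H_1\supseteq H_2\supseteq\cdots\supseteq H_{k-1}$, vertices $v_1,\dots,v_{k-1}$ which induce a path $P_{k-1}$ in $H_{k-2}$ (hence, by part~(1), in $G$), the cops occupying $v_1,\dots,v_{k-2}$ at the end of round $k-2$, and, by part~(5), the robber trapped in $H_{k-1}$ as long as the cops stay put.

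The heart of the proof is to show that $H_{k-1}$ is just the edge $v_{k-2}v_{k-1}$. Half of this is purely formal: since $X_{k-2}=N_{H_{k-2}}(v_{k-2})\setminus\{v_{k-1}\}$ and $H_{k-1}$ is the component of $v_1$ in $H_{k-2}-X_{k-2}$, the only neighbour of $v_{k-2}$ surviving in $H_{k-1}$ is $v_{k-1}$; a short verification — using that the robber is already confined to $H_{k-2}$ at the start of round $k-2$, that the chasing step producing $v_{k-1}$ requires distance at least $2$, and part~(2) of the lemma — shows that $v_{k-2}$ and $v_{k-1}$ both actually lie in $H_{k-1}$. For the remaining vertex we use $P_k$-freeness: suppose $v_{k-1}$ had a neighbour $u$ in $H_{k-1}$ other than $v_{k-2}$. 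Since $v_{k-2}$ is the unique neighbour of $v_{k-1}$ on the induced path $v_1,\dots,v_{k-1}$, necessarily $u\notin\{v_1,\dots,v_{k-1}\}$; tracing $u$ down the nested chain $H_{k-1}\subseteq H_{k-2}-X_{k-2}\subseteq\cdots\subseteq G-X_1$ and using, at every level $i\le k-2$, that $u\notin X_i=N_{H_i}(v_i)\setminus\{v_{i+1}\}$ while $u\ne v_{i+1}$, gives $u\not\sim v_i$ in $G$ for all $i\in\{1,\dots,k-2\}$. Then $v_1,v_2,\dots,v_{k-1},u$ is an induced path on $k$ vertices of $G$ — impossible. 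Hence $N_{H_{k-1}}(v_{k-1})=\{v_{k-2}\}$ too, and since $H_{k-1}$ is connected it equals the single edge $v_{k-2}v_{k-1}$.

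Finally, the robber is trapped in $\{v_{k-2},v_{k-1}\}$ and cannot occupy $v_{k-2}$ (a cop is there), so at the end of round $k-2$ it sits at $v_{k-1}$; in round $k-1$ the cop at $v_{k-2}$ moves across the edge $v_{k-2}v_{k-1}$ onto the robber, so the capture occurs within $k-1$ rounds. I expect the main obstacle to be the structural bookkeeping in the third paragraph — in particular confirming that $v_{k-2}$ and $v_{k-1}$ really do belong to $H_{k-1}$, and handling the small values of $k$ (for $k=4$ the recursion of Lemma~\ref{lemma: train-chase-robber} with $m=2$ removes $v_1$ before $H_{k-1}$ is formed, so that $H_{k-1}$ degenerates and the robber has nowhere to hide — easiest to check directly). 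The passage from "$H_{k-1}$ is a single edge" to the capture, and the extension-to-$P_k$ contradiction, are then immediate.
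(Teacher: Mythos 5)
The paper itself gives no proof of this theorem (it is quoted from \cite{masjoody2020cops} and described as an immediate corollary of the Train-chasing Lemma), and your route --- run the train with $k-2$ cops and argue that the terminal region is too small for the robber to survive one further step --- is exactly the intended one; it parallels the paper's own proof of Proposition \ref{prop: G-k C-k cycle}, which runs the same machinery with $k-3$ cops. Your extension argument (a neighbour $u$ of $v_{k-1}$ inside $H_{k-1}$ satisfies $u\notin X_i$ for all $i$, hence $u\not\sim v_i$ for $i\le k-2$, so $v_1,\dots,v_{k-1},u$ is an induced path on $k$ vertices) is correct and is the heart of the matter.

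The genuine problem is your central structural claim that $H_{k-1}$ equals the single edge $v_{k-2}v_{k-1}$. By definition $H_{k-1}$ is the component of $v_1$ in $H_{k-2}-X_{k-2}$, and by part (3) of the lemma the whole induced path $v_1,\dots,v_{k-1}$ survives into the terminal stage; so if your two neighbourhood computations were taken at face value together with connectedness, they would expel $v_1$ from its own component --- an inconsistency. (Part of the blame lies with the paper's transcription of the lemma: as literally written, $X_i=N_{H_i}(v_i)\setminus\{v_{i+1}\}$ contains $v_{i-1}$ and would sever the path, which is why your ``short verification'' that $v_{k-2}$ and $v_{k-1}$ lie in $H_{k-1}$ cannot be carried out from the stated formulas; the intended $X_i$ spares the path vertices.) What is actually true, and what your argument needs, is that $V(H_{k-1})=\{v_1,\dots,v_{k-1}\}$, i.e.\ $H_{k-1}$ is exactly the induced path: your $P_k$-extension shows $v_{k-1}$ has no $H_{k-1}$-neighbour off the path, the removal of the $X_i$ shows each $v_i$ with $i\le k-2$ retains only its path neighbours, and connectedness of $H_{k-1}$ then excludes any further vertex. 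With that correction the endgame is as you say: the robber, confined to $H_{k-1}$ and barred from the cop-occupied vertices $v_1,\dots,v_{k-2}$, must stand on $v_{k-1}$ at the end of step $k-2$ and is captured by the cop on $v_{k-2}$ in step $k-1$. The remaining points (admissibility of the train strategy in the one-active-cop model, and the determinacy argument covering the case where the robber cannot survive $k-2$ steps) are fine.
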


In this paper we consider $P_k$-free graphs from the viewpoint of some new notions relevant to the cop number of graphs, described below.


\begin{defn}\label{def: conf.cop.no.} The {\em trapping cop number} of a graph $G$, denoted $tcn(G)$, is the minimum number of cops that can force an arrangement of the cops and the robber on vertices of $G$ in which the robber has to stay in the closed neighborhood $N_G[v]$ of a vertex $v$ in order to avoid capture in the very next move of the cops, in which case we say that the cops have {\em trapped} the robber. 
\end{defn}
\begin{defn}\label{def: conf.cop.no.} The {\em confining cop number} of a graph $G$, denoted $ccn(G)$, is the minimum number of cops that can force an arrangement of the cops and the robber on vertices of $G$ in which the robber has to stay in its position in order to avoid capture in the next move of the cops, in which case we say that the cops have {\em confined} the robber.  
\end{defn}

\begin{defn}\label{def: conf.cop.corner.} Let $G$ be a graph with $|G|\ge 3$. We call a vertex $v$ of $G$ a {\em confined corner} of $G$ if there exists a vertex $w$ such that $d_G(v,w)=2$ and $N_G(v)\Seq N_G(w)$, in which case $w$ is said to {\em confine} $v$ in $G$.  
\end{defn}

\begin{proposition}\label{prop: c(G)-ccn(G)}
For every graph $G$ one has $tcn(G)\le ccn(G)\le C(G)\le tcn(G)+1$.
\end{proposition}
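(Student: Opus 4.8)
The plan is to establish the chain of three inequalities $tcn(G)\le ccn(G)\le C(G)\le tcn(G)+1$ one at a time, each by a direct strategy-stealing argument that turns a winning strategy for one game into a (possibly slightly augmented) strategy for the adjacent one.

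For $tcn(G)\le ccn(G)$: I would argue that any arrangement witnessing confinement is automatically an arrangement witnessing trapping. Indeed, if the cops have forced a position in which the robber must stay in its current vertex $v$ to survive the next cop move, then in particular the robber must stay in $N_G[v]$, which is exactly the trapping condition with the distinguished vertex being $v$ itself. Hence any strategy confining the robber with $ccn(G)$ cops also traps the robber, so $tcn(G)\le ccn(G)$.

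For $ccn(G)\le C(G)$: the idea is that capturing is strictly harder than confining. Suppose $C(G)$ cops have a winning strategy. Run that strategy; at some point a cop moves onto the robber's vertex, or — one step earlier — the cops reach a position from which capture is unavoidable. Formally, consider the last cops' turn before capture: at that moment some cop is positioned so that, whatever the robber does, a cop can step onto the robber next turn. If the robber's only surviving option is to stay put, that position already confines the robber and we are done; if the robber could move and still be caught the following turn, then the robber has no safe move either, and in fact the position already confines him. So a capture strategy yields a confinement strategy with the same number of cops, giving $ccn(G)\le C(G)$. (I would phrase this carefully in terms of the game's move order as set up in \cite{masjoody2020cops}.)

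For $C(G)\le tcn(G)+1$: this is the substantive inequality. The plan is: use $tcn(G)$ cops to trap the robber, forcing a position in which the robber must remain in $N_G[v]$ for some vertex $v$; then bring in one additional cop and route it to $v$. Once the extra cop sits on $v$ while the trapping cops hold their positions, the robber is confined to $N_G(v)$, and the cop on $v$ can then capture on the next move (or chase the robber around $N_G[v]$, which is dominated by $v$, and catch him). The main obstacle — and the step deserving the most care — is the transition phase: while the extra cop travels toward $v$, the robber is still moving inside $N_G[v]$, and one must ensure the trapping cops can maintain the trap (the trapping condition is about a forced arrangement, so it persists as long as those cops stay put) and that the extra cop can actually reach $v$ without the robber escaping $N_G[v]$ in the meantime. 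I expect this to reduce to observing that $N_G[v]$ is a "safe region" the robber cannot leave regardless of the extra cop's motion, so the extra cop may take as many moves as needed to walk a shortest path to $v$; then standard one-cop-catches-robber-in-a-dominated-set reasoning finishes the capture.
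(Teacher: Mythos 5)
Your proposal is correct and follows essentially the same route as the paper: the first two inequalities are read off directly from the definitions, and the third is obtained by letting $tcn(G)$ cops trap the robber in $N_G[v]$, holding them stationary, and sending one extra cop to $v$ to force capture on the next move. Your added care about the extra cop's transit to $v$ (the trap persisting while it walks there) is a reasonable elaboration of a step the paper leaves implicit, but it is not a different argument.
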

\begin{proof}
The first two inequalities are obvious. As for the last one, note that with $ tcn(G)+1$ cops at hands, $ tcn(G)$ of them eventually force the robber to stay in $N_G[v]$ for some vertex $v$. By keeping those cops stationary and placing the remaining cop in $v$, the capture of the robber by the following step of the game will be guaranteed. 
\end{proof}

It is known that the cop number of any graph having girth $\ge 5$ is at least as large as its minimum degree:

\begin{proposition}[\cite{Aigner}]\label{prop: minimum_degree_cop_number}
For a graph $G$ with minimum degree $\delta$ one has $C(G) \ge \delta$ provided the girth of $G$ is at least 5.
\end{proposition}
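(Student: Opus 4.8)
\emph{Proof idea.} The plan is to produce an explicit evading strategy for the robber against any team of $k := \delta - 1$ cops. The girth hypothesis will be used twice: once to seat the robber safely at the outset, and once to supply it with an escape neighbour on every subsequent turn. The structural fact I would isolate first is the following consequence of girth at least $5$: for every vertex $v$ and every vertex $w \neq v$ one has $|N_G[w] \cap N_G(v)| \le 1$ — two distinct common elements $x, y$ would produce either a triangle on $\{v,x,y\}$ (if $w \in \{x,y\}$) or a $4$-cycle $v\,x\,w\,y$ (if $w \notin \{x,y\}$), each contradicting the girth bound. In game terms: a cop on any vertex other than the robber's current vertex can be moved, on the next cops' move, onto at most one neighbour of the robber.

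I would dispose of the trivial case $\delta \le 1$ first (then $k \le 0$) and then handle the opening position. If the cops start on $p_1, \dots, p_k$, I claim they cannot dominate $G$. Otherwise, as $|V(G)| \ge \delta + 1 > k$, some vertex $v$ lies outside $\{p_1, \dots, p_k\}$; each of the at least $\delta$ vertices of $N_G(v)$ is then dominated by some $p_i$, and since every $p_i \neq v$ the fact above shows a single $p_i$ dominates at most one vertex of $N_G(v)$, forcing $\delta \le |N_G(v)| \le k = \delta - 1$, absurd. Hence the robber can open on a vertex $r_0$ lying outside $\bigcup_{i} N_G[p_i]$; by symmetry of this condition no cop lies in $N_G[r_0]$, so no cop can move onto $r_0$ on the first cops' turn.

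For the rest of the game I would maintain the invariant that at the start of every robber's turn no cop sits on the robber's vertex $r$. Then each of the $k$ cops occupies a vertex distinct from $r$ and so threatens (can move onto) at most one member of $N_G(r)$; together the $k = \delta - 1$ cops threaten fewer than $\delta \le |N_G(r)|$ neighbours of $r$, leaving a neighbour $x$ that no cop threatens. The robber moves to $x$; no cop can reach $x$ on the following cops' turn, so the robber is not caught, and at the start of its next turn no cop sits on $x$, restoring the invariant. Iterating, the robber survives all $\delta - 1$ cops forever, so $C(G) \ge \delta$.

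The step I expect to be the main obstacle is the opening move: ``minimum degree beats the cop number'' is false in general, and it is precisely the girth assumption — via the at-most-one-common-neighbour fact, which is needed both for the static domination bound and for the per-turn count — that makes the bound hold here. The remainder is routine, with one point deserving care: stating the invariant at the \emph{start} of the robber's turn is what turns ``the robber always has a legal move'' into ``the robber is never co-located with a cop''. If $G$ is disconnected, one simply plays the whole argument inside a component $H$ with $\delta(H) = \delta(G)$.
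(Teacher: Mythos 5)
Your proof is correct, and it is essentially the classical Aigner--Fromme argument that the paper itself invokes by citation rather than reproving: girth at least $5$ gives $|N_G[w]\cap N_G(v)|\le 1$ for $w\ne v$, so $\delta-1$ cops can neither dominate the graph initially nor cover all $\delta$ neighbours of the robber on any turn. As a bonus, your argument (the robber always has an unthreatened \emph{neighbour} to move to) is exactly what the paper alludes to when it says the same proof yields the stronger bound $ccn(G)\ge\delta$ in Proposition \ref{prop: minimum_degree_confining_cop_number}.
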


The proof of Proposition \ref{prop: minimum_degree_cop_number} indeed establishes the following stronger result, which is in terms of the confining cop number of graphs.

\begin{proposition}\label{prop: minimum_degree_confining_cop_number}
For a graph $G$ with minimum degree $\delta$ one has $ccn(G) \ge \delta$ provided the girth of $G$ is at least 5.
\end{proposition}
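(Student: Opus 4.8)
The plan is to re-run the robber's evasion strategy used to prove Proposition~\ref{prop: minimum_degree_cop_number} and observe that it delivers the stronger conclusion essentially for free: that strategy never lets the cops \emph{confine} the robber, not just never lets them capture it. So I would argue by contradiction: suppose $k\le\delta-1$ cops have a strategy that confines the robber on $G$, and exhibit a robber strategy that, at each of the robber's own turns, keeps the robber on a cop-free vertex from which it has a move to a vertex no cop can reach on the following turn. Since having such a move means the robber is never forced to stand still, this contradicts the assumption, and also re-proves that it is never captured.

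The heart of the matter is a local counting estimate, and this is exactly where the girth hypothesis is spent. Fix the robber's position $r$ on its turn (so no cop sits on $r$) and any cop position $c\ne r$; I claim $|N_G(r)\cap N_G[c]|\le 1$. If $c$ is adjacent to $r$, a second vertex of $N_G(r)\cap N_G[c]$ would be adjacent to both $r$ and $c$ and hence close a triangle; if $d_G(r,c)=2$, two distinct vertices of $N_G(r)\cap N_G(c)$ would close a $4$-cycle through $r$ and $c$; and if $d_G(r,c)\ge 3$ the intersection is empty. Girth at least $5$ rules out the first two possibilities, so in every case the bound holds. Summing over the at most $k$ cops $c_1,\dots,c_k$,
\[
\Bigl|\,N_G(r)\cap \bigcup_{i=1}^{k} N_G[c_i]\,\Bigr|\;\le\; k\;<\;\delta\;\le\;\deg_G(r),
\]
so $r$ has a neighbour $r'$ with $r'\notin\bigcup_i N_G[c_i]$. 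Because the $c_i$ all miss $r$, this $r'$ is automatically distinct from $r$: the robber has a genuine move to a vertex that no cop can occupy after the cops' next turn, so it is neither confined nor captured at that step.

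It then remains to organise the induction. For the opening position, after the cops choose a set $C_0$ with $|C_0|\le\delta-1$ the robber must place itself outside $\bigcup_{c\in C_0}N_G[c]$; such a vertex exists because a graph of girth at least $5$ has no dominating set of size smaller than $\delta$ (if $D$ were one and $v\notin D$, sending each $u\in N_G(v)$ to $u$ itself when $u\in D$ and to a fixed $D$-neighbour of $u$ otherwise is injective, again by the triangle/$4$-cycle argument, forcing $\deg_G(v)\le|D|$). After that, whenever it is the robber's turn and the robber is on a cop-free vertex, it moves to a vertex $r'$ as produced above; once the cops respond, no cop can be on $r'$ (each cop lay outside $N_G[r']$ and moved at most one step, and $N_G[\cdot]$ is symmetric), so the invariant ``robber's turn, robber on a cop-free vertex with a safe move available'' is restored and the robber plays on indefinitely without ever being confined. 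Hence no $k\le\delta-1$ cops can confine the robber, i.e.\ $ccn(G)\ge\delta$.

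I expect the only delicate points to be the boundary bookkeeping rather than the idea: making the reduction to ``robber's turn on a cop-free vertex'' airtight (the existence of a safe opening vertex, handled by the domination bound) and being careful that the vertex the counting estimate produces is a \emph{proper} move, which is what upgrades the conclusion from ``not captured'' to ``not confined''. The three-case local estimate itself is routine once the distance cases are separated and the forbidden triangles and $4$-cycles are pointed out.
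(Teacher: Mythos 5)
Your proposal is correct and is essentially the paper's own argument: the paper simply asserts that the Aigner--Fromme proof of Proposition~\ref{prop: minimum_degree_cop_number} already yields the confinement version, and you have written out exactly that proof, with the key girth-$5$ estimate $|N_G(r)\cap N_G[c]|\le 1$ showing that at most $\delta-1$ cops can never block all neighbours of the robber's current vertex, so a safe \emph{proper} move always exists and the robber is never confined. The bookkeeping (safe opening vertex via the domination bound, preservation of the invariant after the cops' reply) is handled correctly.
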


\begin{corollary}
For every graph $G$ of order $\le 9$ one has $ccn(G)\le 2$. Moreover, the Petersen graph is the only graph on 10 vertices whose confining cop number is equal to 3.
\end{corollary}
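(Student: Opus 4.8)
The plan is to obtain both parts of the statement from Proposition~\ref{prop: c(G)-ccn(G)} together with the known enumeration of graphs of small order by cop number, reserving Proposition~\ref{prop: minimum_degree_confining_cop_number} for the final identification of the Petersen graph. Recall (from the work on the minimum order of $k$-cop-win graphs) that the smallest order of a graph with cop number at least $3$ is $10$, and that the graphs of order exactly $10$ with cop number $3$ have been completely listed, the Petersen graph being among them and having cop number exactly $3$. Granting this, the first assertion is immediate: if $|G|\le 9$ then $C(G)\le 2$, whence $ccn(G)\le C(G)\le 2$ by Proposition~\ref{prop: c(G)-ccn(G)}.

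For the ``moreover'' part I would argue as follows. Suppose $|G|=10$ and $ccn(G)=3$. By Proposition~\ref{prop: c(G)-ccn(G)} we have $C(G)\ge ccn(G)=3$, so $G$ is one of the finitely many graphs of order $10$ with cop number $3$; write $\Pi$ for the Petersen graph and $G_1,\dots,G_t$ for the remaining graphs on that list. The Petersen graph does realize confining cop number $3$: it has girth $5$ and is $3$-regular, so Proposition~\ref{prop: minimum_degree_confining_cop_number} gives $ccn(\Pi)\ge 3$, and together with $ccn(\Pi)\le C(\Pi)=3$ this forces $ccn(\Pi)=3$. It therefore remains to show that $ccn(G_i)\le 2$ for every $i$, which will force $G\cong\Pi$.

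To handle the $G_i$ I would use the reduction principle that a confined corner may be deleted without increasing the confining cop number: if $v$ is a confined corner of a graph $H$, confined by $w$, then $ccn(H)\le ccn(H-v)$. The idea is that the cops fix a confining strategy for the smaller graph $H-v$ and execute it in $H$, pretending the robber is located at $w$ whenever it actually occupies $v$; because $N_H(v)\subseteq N_H(w)$ and $d_H(v,w)=2$, every legal move of the real robber corresponds to a legal move of this ``shadow'' robber within $H-v$, and a position confining the shadow at $w$ confines --- or captures --- the real robber at $v$. Since each $G_i$ is one of a short explicit list of order-$10$ graphs, I would then check case by case that $G_i$ possesses a confined corner $v_i$; as $G_i-v_i$ has only $9$ vertices, the first part gives $ccn(G_i)\le ccn(G_i-v_i)\le 2$, completing the proof.

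I expect the last step to be the main obstacle, and it is not conceptual but a matter of bookkeeping: one must have the non-Petersen order-$10$ graphs of cop number $3$ explicitly in hand and verify by inspection that each of them has a confined corner --- there is no single structural reason that guarantees this, and if some graph on the list turned out to have none, one would instead have to produce a two-cop confining strategy for it directly. The only other point requiring care is the justification of the confined-corner reduction, which is routine but needs a check that the correspondence between the real robber and its shadow is consistent in both directions, in particular that the shadow is always free to move when the real robber enters, leaves, or remains at the confined corner.
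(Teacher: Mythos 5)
Your first assertion is handled exactly as in the paper: $|G|\le 9$ gives $C(G)\le 2$, hence $ccn(G)\le C(G)\le 2$ by Proposition~\ref{prop: c(G)-ccn(G)}; and your identification of $ccn(\Pi)=3$ for the Petersen graph $\Pi$ via Proposition~\ref{prop: minimum_degree_confining_cop_number} (girth $5$, $3$-regular) is also the paper's argument. The gap is in how you dispose of the other order-$10$ graphs of cop number $3$: the result of \cite{baird2014minimum} that the paper invokes is stronger than what you recall --- the Petersen graph is the \emph{unique} graph on at most $10$ vertices with cop number $3$, so every non-Petersen graph $G$ on $10$ vertices already has $C(G)\le 2$ and hence $ccn(G)\le 2$. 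Your list $G_1,\dots,G_t$ is therefore empty and the entire confined-corner apparatus is unnecessary. As written, though, your proof is incomplete at precisely the step you flag as ``bookkeeping'': you never produce the list nor verify that each member has a confined corner, so the ``moreover'' direction is not actually established by your argument.

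A second, independent worry is that your reduction ``$v$ a confined corner of $H$ implies $ccn(H)\le ccn(H-v)$'' is not as routine as you suggest. The shadowing argument transfers \emph{capture} of the shadow robber cleanly, but when the shadow is confined at some vertex $x'\ne w$, the real robber sitting at $x'$ may have the extra neighbor $v$ in $H$, and nothing in Definition~\ref{def: conf.cop.corner.} forces $v$ to lie in the closed neighborhood of the cop's position; so confinement in $H-v$ need not yield confinement in $H$. The paper's own shadowing argument of this type (the false-twin case in Theorem~\ref{thm: twin-cn-cograph}) requires the $P_4$-freeness of the ambient graph to rule out exactly this kind of escape, and no such structure is available for an arbitrary order-$10$ graph. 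Fortunately none of this is needed: quoting the uniqueness statement from \cite{baird2014minimum} collapses the case analysis to the Petersen graph alone, which is what the paper does.
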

\begin{proof}
As shown in \cite{baird2014minimum}, the cop number of the Petersen graph is three, whereas every graph $G$ on at most 10 vertices which is not the Petersen graph has $C(G)\le 2$. Moreover, by Proposition \ref{prop: minimum_degree_confining_cop_number}, the confining cop number of the Petersen graph is at least three. Hence, in light of Proposition \ref{prop: c(G)-ccn(G)} the desired claims follow.  
\end{proof}
In light of Proposition \ref{prop: c(G)-ccn(G)}, the following result can be presented as an extension of Theorem \ref{thm: Pk-free-Masood}. 
\begin{theorem}\label{thm: P-k-free-tcn}
If $G$ is a $P_k$-free graph for some $k\ge 3$, then $tcn(G)\le k-3$. Furthermore, $k-3$ cops need no more than $k-3$ steps of the game to trap the robber in the one-active-cop version of the game of cops and robbers.
\end{theorem}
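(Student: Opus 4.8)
The plan is to mimic the proof of Theorem~\ref{thm: Pk-free-Masood}, which (by the remark in the excerpt) follows from the Train-chasing Lemma, and to show that the same bookkeeping gives the slightly stronger ``trapping'' conclusion with one fewer cop. I would run the Train-chasing Lemma with $k-3$ cops: place all $k-3$ cops on a common vertex $v_1$, and assume for contradiction that the robber can survive the next $k-3$ steps no matter how the cops play. Applying Lemma~\ref{lemma: train-chase-robber} with the parameter ``$k$'' of the lemma set to $k-3$, we obtain the induced subgraphs $H_1=G,\dots,H_{k-2}$, the vertices $v_1,\dots,v_{k-2}$, and the sets $X_1,\dots,X_{k-3}$, together with the five conclusions; in particular, by conclusion~(3) the vertices $v_1,\dots,v_{k-2}$ induce a path (on $k-2$ vertices) in $G$, and by conclusions~(4)--(5) the cops can arrive, on the cops' turn in step $k-3$, at positions $v_1,\dots,v_{k-3}$ and thereafter force the robber to remain in $H_{k-2}$.

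The key extra step is to analyze the robber's position $w_{k-3}$ at that moment and show the robber is already trapped, i.e.\ confined to $N_G[z]$ for some vertex $z$. The natural candidate is $z=v_{k-2}=\theta(v_{k-3},w_{k-3},H_{k-3})$, the next vertex along a shortest $(v_{k-3},w_{k-3})$-path in $H_{k-3}$. I would argue as follows. Since $v_1,\dots,v_{k-2}$ induce a $P_{k-2}$ in $G$, and $G$ is $P_k$-free, the robber's vertex $w_{k-3}\in H_{k-2}$ cannot be too far from this path: concretely, the path $v_1,\dots,v_{k-2}$ together with a shortest path inside $H_{k-2}$ from $v_{k-2}$ toward $w_{k-3}$ would, if it had length $\ge 2$ beyond $v_{k-2}$, produce an induced $P_k$ (one must check, using conclusion~(2) that all edges leaving $H_{k-2}$ land in $\bigcup X_i$, that no chords collapse this to a shorter induced path). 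Hence $d_{H_{k-2}}(v_{k-2},w_{k-3})\le 1$, so $w_{k-3}\in N_G[v_{k-2}]$. Moreover, if the robber moves off $v_{k-2}$'s closed neighborhood, the same $P_k$-free argument forbids it; thus keeping the $k-3$ cops at $v_1,\dots,v_{k-3}$ forces the robber to stay in $N_G[v_{k-2}]$, which is exactly the definition of having trapped the robber. This also yields the step count: the trap is sprung on the cops' turn in step $k-3$, so $k-3$ steps suffice.

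I would present the base cases $k=3$ and $k=4$ separately (or note that for $k=3$ the statement $tcn(G)=0$ just says the robber is always within one step of capture, which is immediate, and for $k=4$ one cop chasing along a shortest path traps the robber), since the recursion in the Train-chasing Lemma is cleanest for $k\ge 4$ and the degenerate indices need a sanity check. I would also carefully reconcile the indexing: the lemma's conclusion~(4) puts $C_i$ at $v_i$ on the cops' turn in step ``$k$'' (its own parameter), so with the substitution one gets cops at $v_1,\dots,v_{k-3}$ in step $k-3$, and it is the would-be $(k-2)$-nd cop's target $v_{k-2}$ that plays the role of the vertex confining the robber.

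The main obstacle I anticipate is the chord-elimination argument: showing that a shortest path from $v_{k-2}$ into $H_{k-2}$ reaching a vertex at distance $2$ really does extend $v_1,\dots,v_{k-2}$ to an \emph{induced} $P_k$ in $G$, rather than merely a (possibly non-induced) walk of the right length. This requires combining (i) the minimality/shortest-path property built into the chasing function $\theta$ to rule out chords among the later vertices, (ii) conclusion~(2) of Lemma~\ref{lemma: train-chase-robber} to control which edges of $G$ can leave $H_{k-2}$, and (iii) the fact that the $v_i$'s were themselves chosen along shortest paths in nested induced subgraphs to rule out chords among $v_1,\dots,v_{k-2}$; assembling these into a clean contradiction with $P_k$-freeness is the technical heart of the proof, and I expect it to parallel closely the corresponding step in the proof of Theorem~\ref{thm: Pk-free-Masood} in \cite{masjoody2020cops}.
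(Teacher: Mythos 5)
Your proposal is correct and follows essentially the same route as the paper, whose own ``proof'' is only a one-line sketch (run the Train-chasing Lemma with $k-3$ cops and adapt the argument of Theorem~\ref{thm: Pk-free-Masood}); your fleshed-out version --- cops end on the induced path $v_1,\dots,v_{k-3}$, the robber's reachable set in $H_{k-2}$ is dominated by $v_{k-2}$ since otherwise an induced $P_k$ appears, and edges leaving $H_{k-2}$ land in $\bigcup X_i$ and hence next to a cop --- is exactly the analysis the paper carries out in detail in the proof of Proposition~\ref{prop: G-k C-k cycle}.
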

\begin{proof}[Sketch of proof]
The proof is just an adaptation of the proof of Theorem \ref{thm: Pk-free-Masood} with $k-3$ cops in play. See \cite{masjoody2020cops} for details.
\end{proof}
\begin{remark}
The case $k=3$ is a triviality. Also, note that by Propositions \ref{prop: c(G)-ccn(G)} and Theorem \ref{thm: P-k-free-tcn}, for a $P_k$-free graph $G$ one has $tcn(G)>k-3$ iff $tcn(G)=C(G)=k-2$.
\end{remark}
\begin{notation}\label{notation: G-k-path-free}
Given $k\ge 4$, we will denote the class of all connected $P_k$-free graphs $G$ satisfying $ccn(G)=k-2$ (resp. $C(G)=k-2$) by $\mathscr{G}_{k,c}$ (resp. $\mathscr{G}_k$).
\end{notation}
\vspace{+5pt}
In Section \ref{sec: G-k} we will establish some necessary conditions for elements of $\mathscr{G}_k$ and $\mathscr{G}_{k,c}$. In light of such conditions, in Section \ref{sec: cographs} we will consider the game of cops and robbers on $P_4$-free graphs, also known as {\em cographs}. 


\begin{defn}\label{def: twin}
Distinct vertices $u,v$ in a graph $G$ are said to be {\em twins} (or to form a {\em twin pair}) if every other vertex in $G$ is adjacent to both $u$ and $v$, or non-adjacent to both $u$ and $v$. A pair $u,v$ of twin vertices in $G$ is called {\em true} (resp. {\em false}) whenever $N_G[u]=N_G[v]$ (resp. $N_G(u)=N_G(v)$).
\end{defn}


Several characterizations of cographs were established in \cite{corneil1981complement}, one of which states that a graph $G$ is a cograph iff every nontrivial induced subgraph of $G$ has a pair of twins. As one can easily see, the latter implies the following characterization, which is of our special interest in Section \ref{sec: cographs}:

\begin{theorem}\label{thm: cograph-twin}
A connected nontrivial graph $G$ is a cograph iff it can be obtained from $K_2$ by a sequence of twin operations.
\end{theorem}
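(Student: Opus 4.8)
The plan is to prove Theorem~\ref{thm: cograph-twin} by showing each direction separately, using the known characterization of cographs quoted just before the statement: a graph $G$ is a cograph iff every nontrivial induced subgraph of $G$ has a pair of twins. Throughout, a \emph{twin operation} means adding a new vertex that is either a true twin or a false twin of an existing vertex; note a twin operation never disconnects a graph and never destroys connectivity of a connected graph (adding a true twin keeps the graph connected; adding a false twin of $v$ attaches the new vertex to $N_G(v)$, which is nonempty when $|G|\ge 2$ and $G$ is connected, so connectivity is preserved). I would first record this observation, together with the dual fact that twin operations preserve being a cograph: if $G$ is a cograph and $G'$ is obtained from $G$ by a twin operation, then every nontrivial induced subgraph of $G'$ either avoids the new vertex (hence lies in $G$ and has twins by hypothesis) or contains it alongside its twin (which remain twins in the induced subgraph), so $G'$ is a cograph. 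This handles the ``if'' direction: starting from $K_2$, which is a cograph, every graph buildable by twin operations is a connected cograph.

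For the ``only if'' direction I would argue by induction on $|G|$ that every connected nontrivial cograph $G$ arises from $K_2$ by a sequence of twin operations. The base case $|G|=2$ is immediate since the only connected graph on two vertices is $K_2$ itself. For the inductive step, let $G$ be a connected cograph with $|G|=n\ge 3$. By the twin characterization, $G$ has a pair of twins $u,v$. The key point is to delete one of them, say $v$, and set $G'=G-v$; then $G'$ is an induced subgraph of $G$, hence a cograph, and $u$ is still a vertex of $G'$. I would then check that $G$ is obtained from $G'$ by the single twin operation that re-adds $v$ as a twin of $u$ of the same type (true or false) it had in $G$ --- this is immediate from the definition of twins, since the adjacency of $v$ to $V(G)\setminus\{u,v\}$ mirrors that of $u$. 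The remaining task is to ensure $G'$ is \emph{connected} so that the inductive hypothesis applies to it; if $G'$ happens to be connected and nontrivial we are done, and if $|G'|=1$ then $n=2$, contradicting $n\ge 3$.

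The one genuine obstacle is exactly this connectivity issue: deleting an arbitrary twin might disconnect $G$. This can only happen when $u,v$ are false twins and $v$ (equivalently $u$) is a cut vertex whose removal isolates things, but in fact if $u,v$ are false twins then $u$ and $v$ have the same neighbours, so $u$ remains adjacent to everything $v$ was, and $G-v$ can be disconnected only if $G-\{u,v\}$ was already disconnected with $u,v$ bridging components --- yet false twins are nonadjacent, so any $u$--$v$ path in $G$ must pass through a common neighbour, and such a path survives in $G-v$. Hence $G-v$ is connected whenever $G$ is. (For true twins $u,v$, they are adjacent and share closed neighbourhoods, so $G-v$ is trivially connected as well.) I would state and prove this small lemma --- \emph{deleting one vertex of a twin pair from a connected graph on at least $3$ vertices leaves a connected graph} --- as the technical heart of the argument, after which the induction closes and the theorem follows by combining both directions.
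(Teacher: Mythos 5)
Your proof is correct and fills in exactly the argument the paper leaves implicit: the paper gives no proof of this theorem, asserting only that it follows ``as one can easily see'' from the quoted Corneil--Lerchs--Burlingham characterization (every nontrivial induced subgraph has a twin pair), which is precisely the tool you use in both directions. One small omission: in the ``if'' direction your case analysis for a nontrivial induced subgraph of $G'$ covers only the subgraphs that avoid the new vertex and those that contain it together with its twin, skipping those that contain the new vertex but not its twin; such a subgraph is isomorphic to an induced subgraph of $G$ (swap the new vertex for the original, which has the same neighbours outside the pair), so it too has a twin pair and the argument closes. Your connectivity lemma --- deleting one vertex of a twin pair from a connected graph on at least three vertices leaves a connected graph --- is the right technical point for the induction and is proved correctly in both the true-twin and false-twin cases.
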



\section{Some properties of $\mathscr{G}_k$ and $\mathscr{G}_{k,c}$}\label{sec: G-k}
One can easily see that $\mathscr{G}_{k,c}\Seq \mathscr{G}_k$. In that regard, first we establish some properties of $\mathscr{G}_k$. 

\begin{proposition}\label{prop: G-k C-k cycle}
Let $G\in \mathscr{G}_k$ and $v_1\in V(G)$. With $k-3$ cops at hands, suppose the robber uses any winning strategy against the cops. In addition, suppose the cops start at $v_1$ and play according to any chasing function $\theta$ for $G$ in the first $k-3$ steps of the game. Denote the position at the end of step $k-3$ of the robber by $w$. Let $H_i$ and $v_i$ be as in Lemma \ref{lemma: train-chase-robber}. Furthermore, for $j\in[1\dcdot (k-3)]$ let $$M_j:=N_{G}(v_j)\setminus \bigcup \{N_G[v_i]: 1\le i \le k-2, \;i\not=j \},$$
and for $j>k-3$ let $M_{j}$ be the $j$th neighborhood of $v_1$ in $H_{k-2}$. 
\begin{center}
 \includegraphics[width=0.8\linewidth]{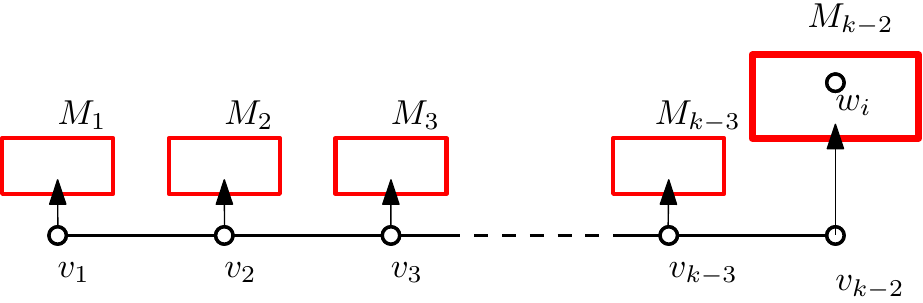}
 \captionof{figure}{An illustration of $M_j$s defined in Proposition \ref{prop: G-k C-k cycle}. }\label{pic: prop: G-k C-k cycle}
\end{center}
Then:
\begin{enumerate}
\item $M_j=\varnothing$ for $j\ge k-1$;
\item $M_j\not= \varnothing$ for each $j\in [1\dcdot(k-2)]$;
\item $M_1\Leftrightarrow M_{k-2}$; 
\item for each $u\in M_1$ and $z\in M_{k-2}$, $G[\{u,z,v_1,\dots,v_{k-2}\}]$ is a $k$-cycle; in particular, every vertex of $G$ belongs to an induced $k$-cycle; and
\item one has

\begin{equation}\label{eq: 1-1 prop: G-k C-k cycle}
w \in  \bigcap \{N_G(M_j): j\in[1\dcdot (k-3)]\}.
\end{equation}
In particular,
\begin{equation}\label{eq: 1 prop: G-k C-k cycle}M_{k-2}\cap \Big( \bigcap \{N_G(M_j): j\in[1\dcdot (k-3)]\}\Big)\not=\varnothing,\end{equation} 
and $G$ contains a vertex that belongs to an induced $C_j$ in $G$ for each $j\in[4\dcdot k]$.
\end{enumerate}

\end{proposition}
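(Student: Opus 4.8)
The plan is to run the Train-chasing Lemma with $k-3$ cops and then exploit the fact that the robber has a winning strategy against them, so the robber can survive all $k-3$ steps no matter how the cops play. Applying Lemma \ref{lemma: train-chase-robber} with this value of $k$ gives induced subgraphs $H_1=G\supseteq H_2 \supseteq \dots \supseteq H_{k-2}$, vertices $v_1,\dots,v_{k-2}$ inducing a path (item 3 of the Lemma, reindexed), and the ``confinement'' property that after step $k-3$ the robber is forced to stay inside $H_{k-2}$; moreover the only edges leaving $H_{k-2}$ go to $\bigcup_{i=1}^{k-3} X_i$, where $X_i = N_{H_i}(v_i)\setminus\{v_{i+1}\}$. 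The first observation is that $H_{k-2}$ cannot be a single vertex: if it were, the robber would be confined to $v_1$'s final component which is just $\{w\}$ and then $k-3$ cops would trap him, forcing $tcn(G)\le k-3$; but $G\in\mathscr G_k$ means $tcn(G)=C(G)=k-2$ by the Remark, contradiction. (Actually one needs a little more: since the robber has a genuine winning strategy, $H_{k-2}$ not only has $\ge 2$ vertices, it must contain the robber's reachable positions forever.)

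Next I would identify $M_j$ for $j\le k-3$ with (essentially) the part of $X_j$ that is not ``double-covered'' by another $v_i$, and $M_j$ for $j\ge k-2$ with the distance layers of $v_1$ inside $H_{k-2}$. For part (1), $M_j=\varnothing$ for $j\ge k-1$: the graph $H_{k-2}$ together with the path $v_1\cdots v_{k-2}$ and any deeper vertex would create a $P_k$ (a path on $k$ vertices) unless the layers collapse; more precisely, if $M_{k-1}\ne\varnothing$ there is a vertex at distance $k-2$ from $v_1$ in $H_{k-2}$, which combined with $v_{k-2},v_{k-3},\dots$ yields an induced $P_{k}$ or longer — one has to check the path is induced using item 2 of the Lemma (no chords to outside) and the structure of shortest-path layers. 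For part (2), $M_j\ne\varnothing$ for $j\in[1\dcdot(k-2)]$: each $v_j$ must have a ``private'' neighbor, otherwise one could show a smaller cop number suffices; for $j=k-2$ this is exactly the statement that $H_{k-2}$ has a vertex at distance $1$ from $v_1$, i.e. $|H_{k-2}|\ge 2$, which we already argued; for $j\le k-3$ one argues that if $M_j=\varnothing$ then the cop $C_j$ is redundant (its neighborhood is covered by the other cops), so $k-4$ cops would already trap the robber, contradicting minimality of $C(G)=k-2$ and $tcn(G)=k-2$.

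For parts (3) and (4), which are the geometric heart, I would show $M_1$ and $M_{k-2}$ are ``symmetric'': a vertex $u\in M_1$ is adjacent to $v_1$ and to nothing among $v_2,\dots,v_{k-2}$ and (this is the point) must be adjacent to every $z\in M_{k-2}$, and conversely, because otherwise $u,v_1,v_2,\dots,v_{k-2},z$ would be an induced $P_k$. The biconditional $M_1\Leftrightarrow M_{k-2}$ (by which I read: $M_1\ne\varnothing \iff M_{k-2}\ne\varnothing$, or more strongly each vertex of one sees each vertex of the other) then follows since both are nonempty by part (2) and the edge $uz$ closes the path $u,v_1,\dots,v_{k-2},z$ into an induced $C_k$; in particular $v_1$ — and by the same argument applied to any starting vertex, every vertex of $G$ — lies on an induced $k$-cycle. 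The main obstacle here is bookkeeping which chords are forbidden: one must carefully invoke item 2 of Lemma \ref{lemma: train-chase-robber} to rule out edges from $H_{k-2}$-vertices back to the path except through the $X_i$'s, and rule out edges among the $v_i$'s beyond the path edges using item 3, so that the cycle $u,v_1,\dots,v_{k-2},z,u$ is genuinely induced and not merely closed.

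Finally, for part (5), I would track the robber: after step $k-3$ the robber sits at $w\in H_{k-2}$, and since the cops $C_1,\dots,C_{k-3}$ occupy $v_1,\dots,v_{k-3}$, the robber (playing to survive, indeed to win) cannot be on $N_G[v_j]$ for any $j$ — in particular $w\notin N_G[v_j]$, but more is true: the robber must be able to move, so $w$ must have a neighbor outside all the $N_G[v_j]$, which forces $w$ to be adjacent to a vertex of each $M_j$; unwinding, $w\in\bigcap_{j\in[1\dcdot(k-3)]} N_G(M_j)$, giving \eqref{eq: 1-1 prop: G-k C-k cycle}. Combining with $w\in H_{k-2}$, and noting $w$'s distance layer in $H_{k-2}$ is some $M_\ell$ with $\ell\le k-2$, while $w$ being a live robber position forces (via part (1) and (2)) $\ell=k-2$, we get $w\in M_{k-2}\cap\bigcap_j N_G(M_j)$, which is \eqref{eq: 1 prop: G-k C-k cycle}. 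The last clause — a vertex on an induced $C_j$ for every $j\in[4\dcdot k]$ — then follows by taking shorter prefixes $v_1,\dots,v_j$ of the path together with appropriate private neighbors: using parts (3)–(4) for the initial segments one produces induced cycles of every intermediate length, with the delicate point again being the inducedness, handled exactly as in part (4).
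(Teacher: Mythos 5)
Your overall plan coincides with the paper's: apply the Train-chasing Lemma with $k-3$ cops, then combine $P_k$-freeness with ``redundant cop'' arguments. Parts (a), (c) and (d) are handled essentially as in the paper, and your treatment of $M_j=\varnothing$ for $j\le k-3$ in part (b) (the cop at $v_j$ becomes redundant, so $k-4$ cops already trap the robber) is the paper's argument. However, two steps contain genuine gaps. First, your justification of $M_{k-2}\neq\varnothing$ is wrong: $M_{k-2}$ is the $(k-2)$th distance layer of $v_1$ in $H_{k-2}$, i.e.\ the private neighbors of $v_{k-2}$, not the first layer, so ``$|H_{k-2}|\ge 2$'' produces nothing in it. The paper gets this for free because the robber's final position $w$ itself lies in $M_{k-2}$: $w$ is in $H_{k-2}$, is neither equal nor adjacent to any of the occupied vertices $v_1,\dots,v_{k-3}$, and by part (a) no deeper layer exists. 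You do eventually place $w$ in $M_{k-2}$ in part (e), so this is repairable by reordering, but as written part (b) is unsupported for $j=k-2$.

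The more serious gap is in your derivation of \eqref{eq: 1-1 prop: G-k C-k cycle}. You argue that ``the robber must be able to move, so $w$ must have a neighbor outside all the $N_G[v_j]$, which forces $w$ to be adjacent to a vertex of each $M_j$.'' This is a non sequitur: a single neighbor of $w$ outside $\bigcup_{j\le k-3}N_G[v_j]$ simply lies in $M_{k-2}$ and says nothing about $M_1,\dots,M_{k-3}$; and if ``all'' is meant to include $j=k-2$, such a neighbor would lie in $M_{k-1}=\varnothing$ and cannot exist. The correct argument needs one threat per index: for each $j_0\in[1\dcdot(k-3)]$ the cops can, in a single move, occupy all of $\{v_i: i\in[1\dcdot(k-2)]\setminus\{j_0\}\}$ by shifting the cops at $v_{j_0},\dots,v_{k-3}$ one step forward along the induced path. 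Every neighbor of $w$ lies in $\bigcup_{i=1}^{k-2}N_G[v_i]$ (any other neighbor would extend $v_1\cdots v_{k-2}\,w$ to an induced $P_k$), so against this particular threat the robber's only refuge is $M_{j_0}$, and surviving it forces $N_G(w)\cap M_{j_0}\neq\varnothing$. Quantifying over $j_0$ yields \eqref{eq: 1-1 prop: G-k C-k cycle}; your version collapses these $k-3$ distinct threats into a single condition and loses the conclusion. Note that this is exactly the same device you correctly used in part (b), just aimed at the robber's escape routes rather than at a redundant cop.
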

\begin{proof}
At the end of step $k-3$ of the game we have the cops along the induced path $P:\;v_1=v,v_2,\cdots,v_{k-3}$ in $H_{k-2}$, the robber at $w_0 \in M_{k-2} $-- hence, in particular, $M_{k-2}\not=\varnothing$-- and the game restricted to $H_{k-2}$ with the properties set forth in Lemma \ref{lemma: train-chase-robber}. In particular, if $M_j\not=\varnothing$ for some $j\ge (k-1)$, $H_{k-2}$ and, hence, $G$ would contain an induced $k$-path from $v_1$ to $M_j$, a contradiction. This establishes (a). Then, observe that since $v_{k-2}$ dominates $M_{k-2}$, as long as the cops cover the vertices of $P$ the robber has to stay in $M_{k-2}$. Moreover, if $M_j=\varnothing$ for some $j\in[1\dcdot (k-3)]$, then keeping cops in all $v_i$ with $i\in [1\dcdot (k-3)]\setminus \{j\}$ would still suffice to keep the robber in $M_{k-2}$, allowing the cops to cover all vertices in $\{v_i: i\in[1\dcdot (k-2)]\setminus \{j\}\}$ in the next step of the game; thereby capture the robber by the following step of the game. But this contradicts the assumption that $G\in\mathscr{G}_k$. Therefore, (b) also holds. Next, note that that if there exist $x\in M_1$ and $y\in M_{k-2}$ such that $xy\notin E(G)$, then $G[\{x,v_1,\dots,v_{k-2},y\}]$ would be a $k$-path, a contradiction. Hence, (c) must also hold. Note that (d) is immediate from (c) and the fact that any vertex $v\in V(G)$ can be set as the initial position $v_1$ of the cops. Finally, if given the position $w$ of the robber at the end of step $k-3$ of the game there exists $j_0\in[1\dcdot(k-3)]$ so that $w\not\in N_G(M_{j_0})$, then, as argued for (a), covering all vertices in $\{v_i: i\in [1\dcdot (k-2)]\setminus\{j\}\}$ by the cops forces the robber to stay within the neighborhood of at least one cop; thereby the robber will be captured by the very next step of the game; a contradiction. Hence, one has $$w\in \bigcap \{N_G(M_j): j\in[1\dcdot (k-3)\} ,$$ from which the other claims in (e) follow. 

\end{proof}

\begin{corollary}\label{corol: G-k 2-connected}
Every $G\in \mathscr{G}_k$ is 2-connected. 
\end{corollary}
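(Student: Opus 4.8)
The plan is to prove directly that $G$ has no cut vertex. Since $G$ is connected and, by Proposition~\ref{prop: G-k C-k cycle}(d), every vertex of $G$ lies on an induced $k$-cycle with $k\ge 4$, the graph $G$ has at least three vertices, so ``connected and no cut vertex'' will give $2$-connectivity. Suppose for contradiction that $x$ is a cut vertex of $G$, and let $A_1,\dots,A_m$ with $m\ge 2$ be the components of $G-x$.

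I would apply Proposition~\ref{prop: G-k C-k cycle} with the initial cop position $v_1:=x$; this is legitimate because $C(G)=k-2>k-3$, so the robber does have a winning strategy against $k-3$ cops, and the set-up of the Train-chasing Lemma applies. The crux is to locate where the chased path $v_1,v_2,\dots,v_{k-2}$ and the sets $M_1,M_{k-2}$ live relative to the $A_l$. The robber's position $w_1$ lies in some component $A_j$ of $G-x$, and every shortest $(x,w_1)$-path in $G$ must leave $x$ directly into $A_j$ (a path reaching $A_j$ cannot first visit another component without returning through $x$); hence $v_2=\theta(v_1,w_1,H_1)\in A_j$. Deleting $X_1=N_G(x)\setminus\{v_2\}$ leaves $x$ with the single neighbour $v_2\in A_j$, so the component $H_2$ of $x$ in $H_1-X_1$ — and therefore every later $H_i$ — is contained in $A_j\cup\{x\}$. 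Consequently $v_2,\dots,v_{k-2}\in A_j$, and $M_{k-2}\subseteq H_{k-2}\setminus\{x\}\subseteq A_j$, since $M_{k-2}$ is the $(k-2)$-nd neighbourhood of $x$ in $H_{k-2}$ and $k-2\ge 2$.

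To finish, I would read off a forbidden adjacency. Because $\bigcup_{i=2}^{k-2}N_G[v_i]\subseteq A_j\cup\{x\}$, we get $M_1=N_G(x)\setminus\bigcup_{i=2}^{k-2}N_G[v_i]\supseteq\bigcup_{l\ne j}\big(N_G(x)\cap A_l\big)$, which is nonempty because $G$ is connected and $m\ge 2$. Pick $u\in M_1$ with $u\in A_l$ for some $l\ne j$. By Proposition~\ref{prop: G-k C-k cycle}(b) we have $M_{k-2}\ne\varnothing$, and by part~(c) the vertex $u$ is adjacent to every vertex of $M_{k-2}$; but $M_{k-2}\subseteq A_j$ while $u\in A_l$ lies in a different component of $G-x$, so $u$ has no neighbour in $M_{k-2}$ — a contradiction. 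Hence $G$ has no cut vertex, so $G$ is $2$-connected.

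The main obstacle is the middle step: carefully checking that once $v_2$ lands in $A_j$, the recursive deletions of the Train-chasing Lemma ``seal off'' the remaining components, so that the whole chased path and $M_{k-2}$ stay inside $A_j\cup\{x\}$ while $M_1$ still contains the neighbours of $x$ in the other components. Once that containment is established, parts (b) and (c) of Proposition~\ref{prop: G-k C-k cycle} yield the contradiction immediately, and nothing further beyond the results already proved is needed.
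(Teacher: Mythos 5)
Your argument is correct, but it takes a different route from the paper's. The paper leans on Proposition~\ref{prop: G-k C-k cycle}(d): since every vertex lies on an induced $k$-cycle, it suffices to show no induced $k$-cycle contains a cut-vertex, and this is done with a short block-decomposition argument --- if a cut-vertex $x$ of the cycle $C$ also lies in another block $B'$, then deleting a neighbour of $x$ on $C$ and appending a neighbour $z$ of $x$ in $B'$ (which, lying outside the block of $C$, is adjacent to no other vertex of $C$) produces an induced $P_k$, contradicting $P_k$-freeness directly. You instead re-run the train-chasing set-up with the putative cut-vertex $x$ as the cops' start $v_1$, track how the deletions confine the chased path and $M_{k-2}$ to $\{x\}\cup A_j$ while $M_1$ retains neighbours of $x$ in the other components, and then invoke parts (b) and (c) to get a forbidden cross-component adjacency. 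Your containment analysis ($v_2\in A_j$ forces $H_2\subseteq A_j\cup\{x\}$, hence all later $H_i$, $v_i$, and $M_{k-2}$ stay there, while $N_G(x)\cap A_l\subseteq M_1$ for $l\ne j$) is sound, and the conclusion via $M_1\Leftrightarrow M_{k-2}$ and $M_{k-2}\ne\varnothing$ is airtight. The trade-off: the paper's proof is shorter and purely structural once (d) is in hand, needing only $P_k$-freeness; yours avoids the block machinery entirely but requires carefully re-entering the game-theoretic construction and verifying that the chasing function's choices respect the cut. Both are legitimate; the paper's is the more economical given that (d) was already proved.
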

\begin{proof}
In light of Proposition \ref{prop: G-k C-k cycle}(d), it suffices to show that no induced $k$-cycle in $G$ contains a cut-vertex of $G$. To this end, consider an induced $k$-cycle $C$ of $G$ and assume, toward a contradiction, that $C$ contains a cut-vertex $x$ of $G$. Let $B$ be the block of $G$ that contains $C$, and $B'$ be another block of $G$ that contains $x$. Pick a neighbor $y$ of $x$ in $C$, and any neighbor $z$ of $x$ in $B'$. Then, the graph
$$ G[(V(C)\setminus \{y\})\cup \{z\}]
$$
will be a $P_k$; a contradiction.
\end{proof}

\begin{proposition}\label{prop: G-k,c}
Let $G\in \mathscr{G}_{k,c}$ and $v_1\in V(G)$. We consider the assumptions and notations of Proposition \ref{prop: G-k C-k cycle} with the exception that we assume the robber uses any winning strategy against confinement by the cops.Then:
\begin{enumerate}
\item $|M_j|\ge 2$ for $j\in \{1,k-2\}$.
\item $E(G[M_j])$ is  nonempty  for $j\in\{1,k-2\}$.
\item $|V(G)|\ge 2k-2$.
\end{enumerate}
\end{proposition}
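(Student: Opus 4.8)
The plan is to mirror the proof of Proposition~\ref{prop: G-k C-k cycle} but upgrade each conclusion using the stronger hypothesis that the robber survives *confinement*, not merely *capture*. Recall the endgame configuration: at the end of step $k-3$ the cops sit on the induced path $P:\,v_1,\dots,v_{k-3}$ inside $H_{k-2}$, the robber is at some $w_0\in M_{k-2}$, and—crucially—since $G\in\mathscr{G}_{k,c}$ the robber must be able to avoid *confinement* against $k-3$ cops. The key observation to exploit is this: if at some moment a set of $\le k-3$ cops occupies a set $S$ of vertices such that the robber is forced into a single vertex $N_G[r]$-wise, i.e. into a vertex $r$ with $N_G(r)\subseteq N_G(S)$—equivalently $r$ has no safe move—then the robber has been confined, contradicting $G\in\mathscr{G}_{k,c}$.

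**Proving (a) and (b).**

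First I would prove (b), since (a) follows from it together with Proposition~\ref{prop: G-k C-k cycle}(b). Suppose $E(G[M_j])=\varnothing$ for $j=k-2$ (the case $j=1$ is symmetric via Proposition~\ref{prop: G-k C-k cycle}(c)). I claim the cops can confine the robber. Keep the $k-3$ cops on $v_1,\dots,v_{k-3}$; the robber is pinned to $M_{k-2}$ by Proposition~\ref{prop: G-k C-k cycle}. Now $M_{k-2}$ is an independent set all of whose vertices are dominated by $v_{k-2}$; moreover by Proposition~\ref{prop: G-k C-k cycle}(e) the robber's current vertex $w_0$ lies in $\bigcap_j N_G(M_j)$, and in particular $w_0$ is adjacent to every vertex of $M_1$. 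The point is that with $E(G[M_{k-2}])$ empty, the only neighbours of $w_0$ inside $H_{k-2}$ besides $v_{k-2}$ come from $M_1\cup\cdots$; a careful accounting of the structure of $H_{k-2}$ (which by Lemma~\ref{lemma: train-chase-robber} is the component of $v_1$ after deleting the $X_i$'s, so its "width" is controlled) shows that one of the cops currently on $P$ can be freed and moved so as to block all escape routes of $w_0$, leaving $w_0$ as the robber's only non-captured vertex — a confinement. The cleanest route is: if $|M_{k-2}|=1$ we are already done by Proposition~\ref{prop: G-k C-k cycle}(b) giving a corner-type argument; if $|M_{k-2}|\ge 2$ but independent, then the robber cannot move within $M_{k-2}$ without passing through $v_{k-2}$ or through a common neighbour, and $v_{k-2}$ plus one relocated cop suffices to confine. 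Having established (b), statement (a) is immediate: a nonempty edge in $G[M_j]$ needs at least two vertices, so $|M_j|\ge 2$ for $j\in\{1,k-2\}$.

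**Proving (c) and identifying the obstacle.**

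For (c), I would count vertices. By Proposition~\ref{prop: G-k C-k cycle}(d) there is an induced $k$-cycle on $\{u,z,v_1,\dots,v_{k-2}\}$ with $u\in M_1$, $z\in M_{k-2}$, giving $k$ vertices. By part (a) just proved, $M_1$ and $M_{k-2}$ each contain a *second* vertex, say $u'\in M_1\setminus\{u\}$ and $z'\in M_{k-2}\setminus\{z\}$; since the $M_j$ are pairwise disjoint and disjoint from $\{v_1,\dots,v_{k-2}\}$ by construction, $u'$ and $z'$ are two further vertices, for a total of $k+2=2k-2$ when... wait, $k+2 = 2k-2$ forces $k=4$; so for general $k$ I instead need to also invoke that $M_2,\dots,M_{k-3}$ are nonempty (Proposition~\ref{prop: G-k C-k cycle}(b)), contributing one new vertex each, which together with the $k$ on the cycle and the two extra vertices in $M_1,M_{k-2}$ gives $k + (k-4) + 2 = 2k-2$. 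The main obstacle is making the confinement argument in (b) genuinely airtight: one must verify that the "freed" cop can actually reach, in the allotted single move, a vertex dominating all of $w_0$'s escape neighbours, and that no escape route of $w_0$ leaves $H_{k-2}$ (this last point is handled by Lemma~\ref{lemma: train-chase-robber}(5), which says keeping cops on $v_1,\dots,v_{k-3}$ already confines the robber to $H_{k-2}$, but one must check it still holds after relocating a cop off $P$). I expect this case analysis — in particular treating $|M_{k-2}|=2$ versus larger — to be where the real work lies, whereas the counting in (c) is then bookkeeping.
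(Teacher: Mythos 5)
There are two genuine gaps. First, your treatment of $E(G[M_{k-2}])\ne\varnothing$ is both overcomplicated and incomplete: no cop needs to be ``freed'' or relocated at all. With the cops simply stationary on $v_1,\dots,v_{k-3}$, every neighbour $z$ of the robber's vertex $w$ that does not lie in $M_{k-2}$ already lies in $N_G[v_i]$ for some $i\le k-3$ (if $z$ avoided all of $\bigcup_{i=1}^{k-2}N_G[v_i]$ then $z,w,v_{k-2},\dots,v_1$ would induce a $P_k$, and the only vertices of $N_G[v_{k-2}]\setminus\bigcup_{i\le k-3}N_G[v_i]$ are those of $M_{k-2}$, since $v_{k-2}$ itself is adjacent to $v_{k-3}$). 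Hence if $w$ has no neighbour in $M_{k-2}$, the current arrangement is \emph{already} a confinement by Definition~\ref{def: conf.cop.no.}; the non-confinement hypothesis therefore hands you a neighbour $w'\in M_{k-2}$ of $w$ directly, giving both $|M_{k-2}|\ge 2$ and $E(G[M_{k-2}])\ne\varnothing$ in one stroke. Your version, which relies on a cop reaching a dominating vertex in one move, is exactly the step you admit you cannot verify, and it is not needed.

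Second, and more seriously, the case $j=1$ is \emph{not} ``symmetric via Proposition~\ref{prop: G-k C-k cycle}(c)''. The game position is asymmetric: the robber sits in $M_{k-2}$, not in $M_1$, and the cops occupy $v_1,\dots,v_{k-3}$, not $v_2,\dots,v_{k-2}$; the relation $M_1\Leftrightarrow M_{k-2}$ says nothing about reversing that position. The paper's proof needs two further cop moves here: advance the cops to $v_2,\dots,v_{k-2}$, which forces the robber off $w$ to some safe vertex $u$, and a $P_k$-freeness argument (the path $u,w,v_{k-2},\dots,v_1$) shows $u$ must lie in $M_1$; then, with the cops held stationary, non-confinement at $u$ yields a neighbour $u'$ of $u$ avoiding $N_G(\{v_2,\dots,v_{k-2}\})$, and a second $P_k$-freeness argument forces $u'\in N_G(v_1)$, hence $u'\in M_1$. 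Without this your proof establishes (a) and (b) only for $j=k-2$, and consequently the count in (c) — which is otherwise fine — loses the two extra vertices it needs from $M_1$.
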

\begin{proof}
\textbf{(a) and (b)} Suppose the cops stay still after step $k-3$ of the game so that the robber has to stay in $M_{k-2}$ for the rest of the game. Since the robber's strategy avoids confinement, the robber at $w$ must have a neighbor $w'\in M_{k-2}$. Hence, $E(G[M_{k-2}])\not=\varnothing$ and $|M_{k-2}|\ge 2$. Likewise, by the cops occupying vertices $v_2,\dots,v_{k-2}$ in step $k-2$, the robber has to leave $w$ to a vertex $u\in M_1$ to avoid capture. Then, by the assumption $G\in \mathscr{G}_{k,c}$, keeping the cops stationary in the next step of the game leads to the existence of a vertex $u'$ satisfying $u'\in N_G(u)\setminus N_G(\{v_i:i\in[2\dcdot(k-2)\})$ to which the robber can move in step $k-1$ of the game. As such, considering the graph $$ G[\{u,u'\}\cup \{v_i:i\in[1\dcdot(k-2)\}]$$ shows that $u'$ must be in $N_G(v_1)$; thereby, $u'\in M_1$. As a result, we also have $E(G[M_{1}])\not=\varnothing$ and $|M_{1}|\ge 2$. (See Figure \ref{pic: 1 prop: G-k,c}.)
\begin{center}
 \includegraphics[width=0.8\linewidth]{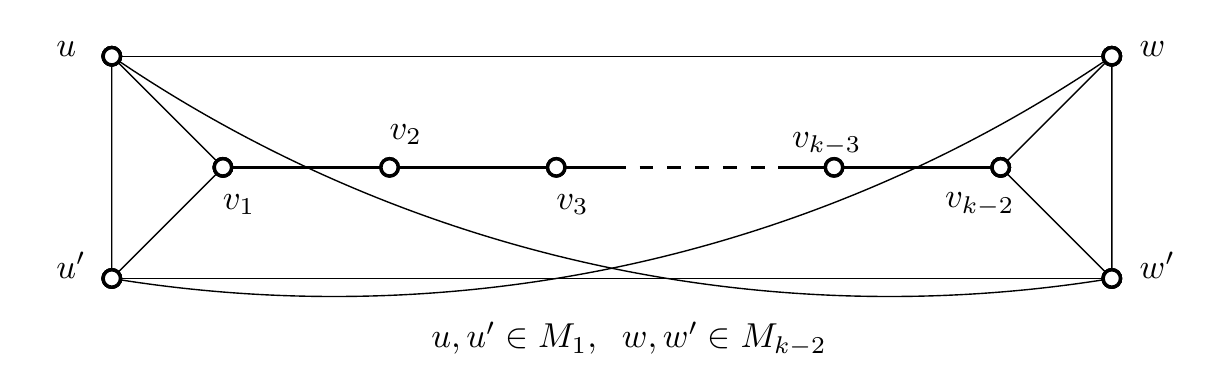}
 \captionof{figure}{An illustration for an induced subgraph of $G$ in Proposition \ref{prop: G-k,c}}\label{pic: 1 prop: G-k,c}
\end{center}
\textbf{(c)} Since the $k-1$ sets $M_1,\dots, M_{k-2}$ and $\{v_i: i\in[1\dcdot(k-2)\}$ are mutually disjoint subsets of $V(G)$, according to (a) and Proposition \ref{prop: G-k C-k cycle}\@(b) we have $$|V(G)|\ge  2(k-4)+2\times 3=2k-2 ,$$
as desired.
\end{proof}
\begin{corollary}\label{cor: -1 prop: G-k,c}
If a connected graph $G$ is planar and $P_k$-free for some $k\ge 4$ then $ccn(G)\le k-3$; in other words, every element of $\mathscr{G}_{k,c}$ is non-planar.
\end{corollary}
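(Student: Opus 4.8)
The plan is to prove the second, equivalent assertion: that no graph in $\mathscr{G}_{k,c}$ is planar. This suffices, because for a connected planar $P_k$-free graph $G$ one has $ccn(G)\le C(G)\le k-2$ by Theorem~\ref{thm: joret}(b) and Proposition~\ref{prop: c(G)-ccn(G)}, so knowing $G\notin\mathscr{G}_{k,c}$ forces $ccn(G)\le k-3$. To establish non-planarity I would exhibit, inside an arbitrary $G\in\mathscr{G}_{k,c}$, a subdivision of $K_{3,3}$, and then invoke Kuratowski's theorem.

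So I would fix $G\in\mathscr{G}_{k,c}$ and any vertex $v_1\in V(G)$, and run the train-chasing set-up of Propositions~\ref{prop: G-k C-k cycle} and~\ref{prop: G-k,c}, producing the vertices $v_1,\dots,v_{k-2}$ and the sets $M_1,M_{k-2}$. I would then collect the following ingredients from those propositions: the sets $M_1$, $M_{k-2}$ and $\{v_1,\dots,v_{k-2}\}$ are pairwise disjoint, with $v_1\ne v_{k-2}$ since $k\ge 4$; the vertices $v_1,v_2,\dots,v_{k-2}$ trace an induced path (Lemma~\ref{lemma: train-chase-robber}); $|M_1|\ge 2$ and $|M_{k-2}|\ge 2$ by Proposition~\ref{prop: G-k,c}(a); every vertex of $M_1$ is adjacent to $v_1$ (by the definition of $M_1$) and to every vertex of $M_{k-2}$ (Proposition~\ref{prop: G-k C-k cycle}(c)); and every vertex of $M_{k-2}$ is adjacent to $v_{k-2}$ --- this last fact being recorded in the proof of Proposition~\ref{prop: G-k C-k cycle}, and also read off from the induced $k$-cycle $G[\{u,z,v_1,\dots,v_{k-2}\}]$ of Proposition~\ref{prop: G-k C-k cycle}(d) once one uses $uv_1\in E(G)$ to fix its cyclic order.

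With these adjacencies in hand I would choose distinct $u_1,u_2\in M_1$ and distinct $z_1,z_2\in M_{k-2}$; the six vertices $u_1,u_2,z_1,z_2,v_1,v_{k-2}$ are then pairwise distinct, and I claim they are the branch vertices of a $K_{3,3}$-subdivision in $G$ with sides $A=\{u_1,u_2,v_{k-2}\}$ and $B=\{z_1,z_2,v_1\}$. Indeed, eight of the nine required connections between $A$ and $B$, namely the pairs $u_iz_j$, $u_iv_1$ and $v_{k-2}z_j$ for $i,j\in\{1,2\}$, are genuine edges of $G$ by the adjacencies listed above; and the ninth connection, between $v_{k-2}$ and $v_1$, is realized by the subpath $v_{k-2},v_{k-3},\dots,v_1$ of the induced path $v_1,\dots,v_{k-2}$, whose interior vertices $v_2,\dots,v_{k-3}$ lie in $\{v_1,\dots,v_{k-2}\}$ and so avoid $A\cup B$ (when $k=4$ this subpath is simply the edge $v_1v_2$). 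Since the eight edge-connections have no interior vertices, the nine connecting paths are pairwise internally disjoint, so $G$ contains a subdivision of $K_{3,3}$ and is therefore non-planar.

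I do not expect a real obstacle: once the adjacency data has been read off from Propositions~\ref{prop: G-k C-k cycle} and~\ref{prop: G-k,c}, the remainder is a bookkeeping check. The two points that need care are (i) confirming that the six branch vertices are genuinely distinct and that the path joining $v_{k-2}$ and $v_1$ is internally disjoint from them, in particular handling the degenerate case $k=4$ where that path is a single edge; and (ii) the concluding step from ``$G$ is non-planar'' to ``$ccn(G)\le k-3$'', which relies on the a priori inequality $ccn(G)\le k-2$ valid for every $P_k$-free graph.
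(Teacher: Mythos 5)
Your proposal is correct and follows essentially the same route as the paper: both extract $|M_1|,|M_{k-2}|\ge 2$ and $M_1\Leftrightarrow M_{k-2}$ from Propositions~\ref{prop: G-k C-k cycle} and~\ref{prop: G-k,c}, build a $K_{3,3}$-subdivision on the partite sets $\{u,u',v_{k-2}\}$ and $\{w,w',v_1\}$ with the induced path $v_1,\dots,v_{k-2}$ supplying the one non-edge connection, and conclude by Kuratowski's theorem. Your explicit bookkeeping of the branch-vertex distinctness, the $k=4$ degenerate case, and the final deduction $ccn(G)\le k-3$ via $ccn(G)\le C(G)\le k-2$ only fills in details the paper leaves implicit.
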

\begin{proof}
If $G\in \mathscr{G}_{k,c}$ then, in terms of the notations of Proposition \ref{prop: G-k,c} and its proof, one has $|M_1|,|M_{k-3}|\ge 2$ with $M_1\Leftrightarrow M_{k-3}$. Then, for any pair $\{u,u'\}$ and $\{w,w'\}$ of 2-subsets of $M_1$ and $M_{k-3}$ the graph $$G[\{u,u',w,w',v_1,\dots,v_{k-2}\}]$$ contains a subdivision of $K_{3,3}$ with partite sets $\{u,u',v_{k-2}\}$ and  $\{w,w',v_1\}$. (See Figure \ref{pic: 1 prop: G-k,c}.) Hence, $G$ is non-planar according to the Kuratowski Theorem.
\end{proof}
\noindent The following is also immediate in light of Proposition \ref{prop: G-k,c}.
\begin{corollary}\label{cor: prop: G-k,c}
For every $G\in\mathscr{G}_{k,c}$ one has $\delta(G)\ge 3$ and $\Delta(G)\ge k$. 
\end{corollary}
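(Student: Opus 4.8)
The plan is to read both inequalities directly off the set decomposition supplied by Propositions~\ref{prop: G-k C-k cycle} and~\ref{prop: G-k,c}. The point that makes this work is that $G\in\mathscr{G}_{k,c}$ means $k-3$ cops have no strategy to confine the robber \emph{no matter which vertex they start from}; so, exactly as in the proof of Proposition~\ref{prop: G-k C-k cycle}(d), we are free to take any $v\in V(G)$ as the cops' initial vertex $v_1$ and then invoke the conclusions of Propositions~\ref{prop: G-k C-k cycle} and~\ref{prop: G-k,c} for that choice.

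For $\delta(G)\ge 3$: fix $v\in V(G)$ and set $v_1=v$. Then $N_G(v_1)$ contains the set $M_1$ (by definition $M_1\subseteq N_G(v_1)$) together with the vertex $v_2$, and $v_2\notin M_1$ because $M_1$ is defined so as to avoid $N_G[v_2]$ while $v_2\in N_G[v_2]$. Since $|M_1|\ge 2$ by Proposition~\ref{prop: G-k,c}(a), this gives $\deg_G(v)\ge|M_1|+1\ge 3$; as $v$ was arbitrary, $\delta(G)\ge 3$.

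For $\Delta(G)\ge k$: run the game with an arbitrary $v_1$ and let $w\in M_{k-2}$ be the robber's position at the end of step $k-3$, with $v_i$ and $M_j$ as before. I would list $k$ distinct neighbours of $w$, namely: (i) the vertex $v_{k-2}$, since $w\in M_{k-2}\subseteq N_G(v_{k-2})$; (ii) every vertex of $M_1$, of which there are at least two by Proposition~\ref{prop: G-k,c}(a), because $w\in M_{k-2}$ and $M_1\Leftrightarrow M_{k-2}$ by Proposition~\ref{prop: G-k C-k cycle}(c); (iii) one vertex from each $M_j$ with $2\le j\le k-3$, which exists since $w\in N_G(M_j)$ by Proposition~\ref{prop: G-k C-k cycle}(e), contributing $k-4$ further vertices; and (iv) a vertex $w'\in M_{k-2}$ with $w'w\in E(G)$, whose existence is established in the proof of Proposition~\ref{prop: G-k,c}(a) and~(b). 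As $\{v_1,\dots,v_{k-2}\}$ and $M_1,M_2,\dots,M_{k-2}$ are pairwise disjoint, the vertices listed in (i)--(iv) are genuinely distinct, so $\deg_G(w)\ge 1+2+(k-4)+1=k$ and hence $\Delta(G)\ge k$.

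The $\delta$-part is immediate; the only delicate point is squeezing $k$ rather than $k-1$ distinct neighbours of $w$ out of the decomposition. Counting one neighbour of $w$ in each of $M_1,\dots,M_{k-3}$ plus $v_{k-2}$ and $w'$ yields only $k-1$ vertices, and the missing unit is recovered precisely by promoting the single guaranteed neighbour of $w$ in $M_1$ to all of $M_1$, using $|M_1|\ge 2$ together with the complete join $M_1\Leftrightarrow M_{k-2}$. I would also double-check the base case $k=4$, where the index range $2\le j\le k-3$ in (iii) is empty and the count reads $1+2+0+1=4$, so nothing goes wrong.
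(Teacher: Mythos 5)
Your proof is correct and follows essentially the same route as the paper: both inequalities are read off the disjoint sets $M_1,\dots,M_{k-2},\{v_1,\dots,v_{k-2}\}$ using $|M_1|\ge 2$, the join $M_1\Leftrightarrow M_{k-2}$, the neighbour of $w$ in each $M_j$, and the edge $ww'$ inside $M_{k-2}$. In fact your accounting for $\Delta(G)\ge k$ is the more careful one: the paper's tally ``$3+(k-3)$'' silently double-counts the $M_1$-neighbour of $w$ while omitting $v_{k-2}$, whereas your list (i)--(iv) makes the $k$ distinct neighbours explicit, including the degenerate case $k=4$.
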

\begin{proof}
We implement the notations of Proposition \ref{prop: G-k,c} and its proof. In that regard, for any typical vertex $v_1$ of $G$ one has $N_G(v_1)\supseteq \{u,u',v_2\}$. Hence, $\delta(G)\ge 3$. Furthermore, since $\{w',u,u'\}\Seq N_G(w)$ and $N_G(w)\cap M_j$ is non-empty for each $j\in[1\dcdot (k-3)]$, one also has $|N_G(w)|\ge 3+ (k-3)=k$. Thus, $\Delta(G)\ge k$, as desired.
\end{proof}%
\noindent For $k\ge 5$ one can strengthen the first part of Corollary \ref{cor: prop: G-k,c}:
\begin{proposition}\label{prop: G-k,c;k.g.e.5}
For every $G\in\mathscr{G}_{k,c}$ with $k\ge 5$ one has $\delta(G)\ge 4$.
\end{proposition}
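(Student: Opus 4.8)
The plan is to refine the proof of the first part of Corollary~\ref{cor: prop: G-k,c}. Fix an arbitrary vertex $v_1\in V(G)$; I will show $\deg_G(v_1)\ge 4$. Since $G\in\mathscr{G}_{k,c}$, Proposition~\ref{prop: c(G)-ccn(G)} and Theorem~\ref{thm: P-k-free-tcn} force $C(G)=ccn(G)=k-2$, so against $k-3$ cops the robber has a strategy avoiding both capture and confinement forever. I run the setup of Proposition~\ref{prop: G-k,c} (hence also of Proposition~\ref{prop: G-k C-k cycle}) against such a strategy with the cops starting at $v_1$: at the end of step $k-3$ the cops occupy the induced path $v_1,\dots,v_{k-3}$, the robber sits at some $w\in M_{k-2}$, $v_{k-2}$ dominates $M_{k-2}$, and by Propositions~\ref{prop: G-k C-k cycle} and \ref{prop: G-k,c} one has $|M_1|\ge 2$, $E(G[M_1])\ne\varnothing$, and $M_1\Leftrightarrow M_{k-2}$. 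As in Corollary~\ref{cor: prop: G-k,c}, $\{v_2\}\cup M_1\subseteq N_G(v_1)$, so it suffices to reach a contradiction from $\deg_G(v_1)=3$, i.e. from $N_G(v_1)=\{v_2,u,u'\}$ with $M_1=\{u,u'\}$ --- whence $u\sim u'$, the unique possible edge of $G[M_1]$.

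I would then give the cops one extra move: the cop at $v_1$ moves to $u$, while every other cop stays. This is where $k\ge 5$ enters: at least one cop is left behind, occupying $v_2,\dots,v_{k-3}$ --- in particular $v_2$. I claim the robber, still at $w$, is caught on the cops' next move. Since $M_1\Leftrightarrow M_{k-2}$, the cop at $u$ dominates $M_{k-2}\ni w$, so the robber must move; thus it suffices to prove $N_G(w)\subseteq N_G[u]\cup\bigcup_{j=2}^{k-3}N_G[v_j]$. The routine part uses only what is already recorded: for $x\in N_G(w)$, if $x\in M_{k-2}\cup\{u,u'\}$ then $x\in N_G[u]$; if $x=v_{k-2}$ then $x\in N_G[v_{k-3}]$; if $x\in M_j$ with $j\in[2,k-3]$ then $x\in N_G[v_j]$; and $x\in\{v_1,\dots,v_{k-3}\}$ is impossible, since by Proposition~\ref{prop: G-k C-k cycle}(d) the vertices $v_1,\dots,v_{k-2},w,u$ form, in this cyclic order, an induced $k$-cycle, so $w\not\sim v_j$ for all $j\le k-3$.

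The remaining case --- $x\in N_G(w)$ lying outside $\{v_1,\dots,v_{k-2}\}\cup M_1\cup\cdots\cup M_{k-2}$ --- is the real obstacle, and I would clear it with two uses of $P_k$-freeness. As $\deg_G(v_1)=3$ and $x\notin\{v_2,u,u'\}$, we get $x\not\sim v_1$. If moreover $x\not\sim v_j$ for every $j\in[2,k-2]$, then $x,w,v_{k-2},v_{k-3},\dots,v_1$ is an induced $P_k$ (using $w\sim v_{k-2}$, $w\not\sim v_j$ for $j\le k-3$, and that $v_1\cdots v_{k-2}$ is induced) --- contradiction; so $x\sim v_{j_0}$ for some $j_0\in[2,k-2]$. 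If $j_0\le k-3$ then $x\in N_G[v_{j_0}]$, done; otherwise $x\sim v_{k-2}$ while $x\not\sim v_1,\dots,v_{k-3}$, and if also $x\not\sim u$ then $x,v_{k-2},v_{k-3},\dots,v_1,u$ is an induced $P_k$ (using $v_1\sim u$ and $u\not\sim v_{k-2}$, since $u\in M_1$) --- contradiction; hence $x\sim u$, so $x\in N_G[u]$. In every case $x$ is covered, so $k-3$ cops catch the robber, contradicting $C(G)=k-2$. Therefore $\deg_G(v_1)\ge 4$, and since $v_1$ was arbitrary, $\delta(G)\ge 4$.

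The heart of the matter, and the one step that genuinely strengthens $\delta(G)\ge 3$, is showing that once $v_1$ is traded for $u$ the robber has nowhere to run. This needs both the exact description of $N_G(v_1)$ --- so that ``unseen'' neighbours $x$ of $w$ can be eliminated by induced $P_k$'s threaded along the long path $v_1\cdots v_{k-2}$ --- and a second cop to cover $v_2$. Accordingly the argument collapses for $k=4$, exactly as the statement permits.
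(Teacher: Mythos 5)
Your proof is correct, but your endgame is genuinely different from the paper's. After the common setup (cops on $v_1,\dots,v_{k-3}$, robber at $w\in M_{k-2}$, $N_G(v_1)=\{v_2,u,u'\}$, $M_1=\{u,u'\}$ with $uu'\in E(G)$), the paper plays a three-move chase: it shifts the cops to $v_2,\dots,v_{k-2}$ to push the robber into $M_1$, then moves the cop at $v_{k-2}$ to $w$ to push it to a non-neighbour $z$ of $w$ in $M_{k-2}$, and finally moves the cop at $w$ to $u$ to force the robber into $M_{k-1}$, contradicting $M_{k-1}=\varnothing$ from Proposition~\ref{prop: G-k C-k cycle}(a). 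You instead make a single relocation $v_1\to u$ and prove outright that $N_G[w]\subseteq N_G[u]\cup\bigcup_{j=2}^{k-3}N_G[v_j]$, so $k-3$ cops capture the robber, contradicting $C(G)=k-2$. Your route is shorter and, if anything, more explicit: the two induced-$P_k$ arguments that dispose of neighbours of $w$ lying outside $\{v_1,\dots,v_{k-2}\}\cup M_1\cup\cdots\cup M_{k-2}$ make precise exactly the kind of ``the robber is forced to\dots'' assertions that the paper's version leaves implicit. Both arguments use the same structural inputs ($M_1\Leftrightarrow M_{k-2}$, the induced $k$-cycle through $u,v_1,\dots,v_{k-2},w$, and $|M_1|=2$ with $E(G[M_1])\ne\varnothing$ from Proposition~\ref{prop: G-k,c}), and both correctly degenerate at $k=4$, where no cop remains on $v_2$.
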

\begin{proof}
Toward a contradiction, let $G\in \mathscr{G}_{k,c}$ with $\delta(G)\le 3$. Then, by Corollary \ref{cor: prop: G-k,c}, $\delta(G)=3$. Pick any vertex $v_1\in V(G)$ with $\deg_G(v_1)=3$. Let there be $k-3$ cops at hands. Then, with the assumptions and notations of Propositions \ref{prop: G-k C-k cycle} and \ref{prop: G-k,c}, one has $N_G(v_1)=\{u,u',v_2\}$ and $M_1=\{u,u'\}$. (See Figure \ref{pic: 1 prop: G-k,c}.) Having cops at vertices $v_2,\ldots,v_{k-2}$ in step $k-2$ of the game forces the robber to move to one of the vertices in $M_1$, say $u$. Then, in the following step, moving the cop at $v_{k-2}$ to $w$ and keeping the other cops stationary forces the robber to move to a neighbor, say, $z$ of $u$ so that the robber will avoid being captured in the very next cop moves. Then, one must have $z\in V(G)\setminus N(v_j)$ for each $j\in[2\dcdot (k-3)]$. Moreover, by Proposition \ref{prop: G-k C-k cycle}(c), $z\not\in M_1$; thereby $z\not\in N(v_1)$. Therefore, $z$ must be a non-neighbor of $w$ in $M_{k-2}$, for otherwise $G[\{v_j:j\in[1\dcdot(k-2)]\}\cup \{u,z\}]$ would be a $P_k$. 
\begin{center}
 \includegraphics[width=0.8\linewidth]{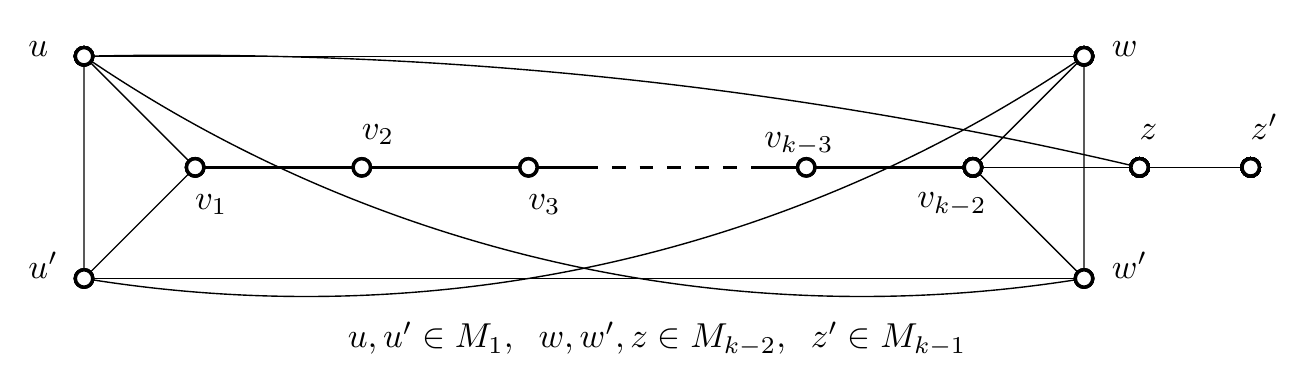}
 \captionof{figure}{Proof of Proposition \ref{prop: G-k,c;k.g.e.5} by contradiction}\label{pic: 1 prop: G-k,c;k.g.e.5}
\end{center}
Then, in the next step of the game, move the cop at $w$ to $u$ while keeping the rest of the cops stationary. This forces the robber to move to a neighbor $z'$ of $z$ in order to avoid capture. As with $z$, now one must have $z'\in V(G)\setminus N(v_j)$ for each $j\in[2\dcdot (k-3)]$, and $z'\not\in M_{j}$ for $j\in\{1,k-2\}$. Consequently, one also has $z'\in V(G)\setminus N(v_j)$ for $j\in[1\dcdot(k-2)]$; i.e. $z'\in M_{k-1}$. The latter contradicts Proposition \ref{prop: G-k C-k cycle}(a). (See Figure \ref{pic: 1 prop: G-k,c;k.g.e.5} for an illustration.)
\end{proof}
\section{Cops and Robbers on Cographs}\label{sec: cographs}
\noindent By Theorem \ref{thm: P-k-free-tcn}, for every cograph $G$ one has $tcn(G)=1$. In this section, we consider the effects of twin operations on the cop number and confining cop number of cographs.
\begin{proposition}\label{prop: twin-cn-cograph}
Let $G_1$ be a cograph and $x\in V(G_1)$.
\begin{enumerate}
\item If $G_2$ is obtained from $G_1$ by adding a true twin $y$ of $x$, then $c(G_1)=c(G_2)$.
\item If $G_3$ is obtained from $G_1$ by adding a false twin $z$ of $x$, then $c(G_1)\le c(G_3)$.
\end{enumerate}
\end{proposition}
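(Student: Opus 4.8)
The plan is to exploit the fact that true twins are "interchangeable" from the cops' point of view, while false twins only add vertices without adding edges among the neighbourhood, so a robber can only benefit. For part (a) I would argue both inequalities. For $c(G_2)\le c(G_1)$: give $c(G_1)$ cops a winning strategy on $G_1$; to play on $G_2$, have the cops ignore the difference between $x$ and $y$, i.e.\ run a ``shadow'' robber position $r'\in V(G_1)$ obtained from the true robber position $r\in V(G_2)$ by replacing $y$ with $x$ (and leaving $r$ unchanged otherwise). Since $N_{G_2}[x]=N_{G_2}[y]$, every robber move in $G_2$ projects to a legal robber move of the shadow in $G_1$, and every cop move dictated by the $G_1$-strategy against the shadow can be realized in $G_2$ (playing ``$x$'' whenever the $G_1$-strategy says to sit on the vertex that is $x$). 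When the $G_1$-strategy captures the shadow on some vertex $t$, the true robber sits on $t$ or on the twin of $t$; in either case, since $t$ and its twin are adjacent (true twins are adjacent) or equal, a cop on $t$ captures or is adjacent — I should be slightly careful here and let the capturing cop instead move onto whichever of $\{x,y\}$ the robber actually occupies on the final move. For $c(G_1)\le c(G_2)$: $G_1=G_2-y$ is an induced subgraph, but the cop number is not monotone under taking induced subgraphs in general, so instead run the $G_2$-strategy and have the robber on $G_1$ mimic it, treating the (non-existent) vertex $y$ as never visited; a cop landing on $y$ in the $G_2$-strategy is simulated by that cop landing on $x$ in $G_1$, which dominates exactly the same vertices of $V(G_1)$. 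This gives a winning cop strategy on $G_1$ with $c(G_2)$ cops, hence $c(G_1)\le c(G_2)$.

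For part (b), only the inequality $c(G_1)\le c(G_3)$ is claimed, and this follows from the same ``the robber has at least as much room'' principle: $G_1=G_3-z$ is an induced subgraph of $G_3$, and the relevant monotonicity does hold in this direction because deleting a false twin cannot help the robber. Concretely, take an optimal cop strategy on $G_3$ using $c(G_3)$ cops; simulate it on $G_1$ by the rule that whenever a cop would move onto $z$ in $G_3$, it moves onto $x$ in $G_1$ instead, and keep a shadow robber in $G_3$ equal to the $G_1$-robber (it never sits on $z$). Because $N_{G_3}(z)=N_{G_3}(x)$, a cop on $x$ in $G_1$ sees every $G_1$-vertex that a cop on $z$ would see in $G_3$, so no robber escape in $G_1$ corresponds to a robber escape in $G_3$; thus the simulated strategy wins on $G_1$ with $c(G_3)$ cops.

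The main obstacle I expect is the bookkeeping around the exact capture convention and the ``cops $\ge$ robber turn'' ordering: one has to check that the shadow/simulation correspondence is maintained move-by-move and that ``the $G_1$-strategy captures the shadow'' genuinely converts to ``the cops capture the real robber,'' paying particular attention to the last move when the robber sits on $x$ or on its twin. For true twins this is clean because $xy\in E(G_2)$, so a cop adjacent to the shadow on $x$ is adjacent to the robber on $y$ as well; for the induced-subgraph directions one must note explicitly that replacing a cop's target $y$ (resp.\ $z$) by $x$ never decreases the set of $V(G_1)$-vertices that cop controls, which is exactly the twin property. I would also remark why part (b) is only an inequality: adding a false twin $z$ to a triangle-free-ish configuration genuinely can raise the cop number (the robber gains the option of oscillating between $x$ and $z$, which are non-adjacent), so no reverse bound is available in general — and I would point out that $G_1,G_3$ remain cographs throughout by Theorem~\ref{thm: cograph-twin}, so the statement stays within the advertised class.
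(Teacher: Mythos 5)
Your proof is correct, but it takes a partly different route from the paper's. For part (a) the paper also uses the one-point retraction $y\mapsto x$ to get $c(G_1)\le c(G_2)$, but for the reverse inequality it argues by cases on $c(G_1)$: if $c(G_1)=1$ it appends the dominated vertex $y$ to a dismantling (elimination) ordering of $G_1$, and if $c(G_1)=2$ it simply invokes the fact that cographs are cop-bounded by two. Your shadow-robber simulation proves $c(G_2)\le c(G_1)$ directly and uniformly, without dismantlability and without any cograph-specific bound; indeed your argument establishes both parts for arbitrary graphs, which is a genuine gain in generality. For part (b) the paper argues from the robber's side: assuming $c(G_1)=2$, it lifts a one-cop evasion strategy from $G_1$ to $G_3$ (the robber treats a cop on $z$ as a cop on $x$) and again closes with the two-cop bound for cographs, whereas you transfer a winning cop strategy from $G_3$ down to $G_1$ via the retraction $z\mapsto x$. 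Both are sound; the paper's version is shorter given the known facts about cographs, while yours is self-contained and class-independent. The only points worth writing out carefully in your version are the ones you already flag: that projecting a cop or robber move across a twin pair yields a legal move (possibly a pass) because $N[x]=N[y]$, respectively $N(x)=N(z)$, and that capture of the shadow at $x$ converts to capture of the real robber because $x$ and its true twin are adjacent.
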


\begin{proof}
We will use the fact that a graph is copwin iff it is dismantlable. \textbf{(a)} Since $N_{G_2}(x)=N_{G_2}(y)$, $G_1$ is a one-point retract of $G_2$ and, hence, $C(G_1)\le C(G_2)$. As a result, if $c(G_1)=1$ then pasting $x$ in front of any elimination ordering of $G_1$ gives an elimination ordering of $G_2$; therefore $c(G_2)=1$. Moreover, one also has $c(G_1)=c(G_2)$ whenever $c(G_1)=2$, since $C(G_1)\le C(G_2)$ and cographs are cop-bounded by two. \textbf{(b)} By the fact that cographs are cop-bounded by two, one only needs to consider the case where $c(G_1)=2$. In this case, the robber has a winning strategy $\mathscr{S}$ against one cop on $G_1$. Then on $G_3$ and against one cop, the robber can react to any move of the cop to or from $y$ as if the cop has moved to or from $x$ and, as such, simply move among $V(G_1)$ according to $\mathscr{S}$. One can easily check that the latter is a winning strategy for the robber on $G_2$; therefore, $C(G_3)=2$ whenever $c(G_1)=2$.
\end{proof}
\begin{remark}
Note that the false twin operation can indeed increase the cop number of a cograph, as is the case with $C_4$ (with $c(C_4)=2$) which is obtained by the false twin operation on the degree-two vertex of the copwin graph $P_3$.
\end{remark}

\begin{theorem}\label{thm: twin-cn-cograph}
Let $G_1$ be a cograph and $x\in V(G_1)$.
\begin{enumerate}
\item If $G_2$ is obtained from $G_1$ by adding a true twin $y$ of $x$, then one has $ccn(G_1)\le ccn(G_2)$.
\item If $G_3$ is obtained from $G_1$ by adding a false twin $z$ of $x$, then one has $ccn(G_1)= ccn(G_3)$.
\end{enumerate}
\end{theorem}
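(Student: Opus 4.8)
The plan is to mirror the structure of the proof of Proposition \ref{prop: twin-cn-cograph}, replacing "copwin / dismantlable" arguments with the corresponding statements about confinement. For part (a), I would first record the one-point retract observation: since $N_{G_2}(x)=N_{G_2}(y)$ (true twin, so even the adjacency between $x$ and $y$ is symmetric to all others, and true twins are adjacent), the map $G_2\to G_1$ sending $y\mapsto x$ and fixing everything else is a retraction, and retractions do not decrease the confining cop number. Concretely, if the cops can confine the robber on $G_1$ with $ccn(G_1)$ cops, then on $G_2$ the cops simulate that strategy: whenever a cop would occupy $x$ they may occupy either $x$ or $y$, and whenever the robber moves to or from $y$ the cops pretend it moved to or from $x$ and respond accordingly. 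The one subtlety to check is that a "confined" position in $G_1$ — robber at a vertex $v$ with every neighbor of $v$ dominated by a cop — pulls back to a confined position in $G_2$: if the robber sits at $y$, then because $N_{G_2}[y]=N_{G_2}[x]$, the cop arrangement that confined the copy at $x$ also confines $y$; and if the robber sits at some $v\neq x,y$, the only new neighbor it could have in $G_2$ is $y$ (present iff $x\in N_{G_1}(v)$), which is handled by having a cop on $y$ instead of (or in addition to) $x$. Hence $ccn(G_1)\le ccn(G_2)$.

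For part (b) I would argue equality by two inequalities. For $ccn(G_3)\le ccn(G_1)$: here $z$ is a false twin of $x$, so $z$ and $x$ are non-adjacent and $N_{G_3}(z)=N_{G_3}(x)$; the map fixing everything and sending $z\mapsto x$ is again a retraction $G_3\to G_1$ (one checks it is a homomorphism: every edge incident to $z$ maps to an edge incident to $x$, using $N(z)=N(x)$, and non-edge $xz$ maps to a loop which is fine for a retraction onto the image $G_1$), so $ccn(G_3)\le ccn(G_1)$ by the same retract principle. For the reverse inequality $ccn(G_1)\le ccn(G_3)$, I would run the robber-simulation argument as in Proposition \ref{prop: twin-cn-cograph}(b): given a robber strategy evading confinement by $ccn(G_3)-1$ cops on $G_3$, we produce one on $G_1$ — wait, that is the wrong direction. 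Instead: given a cop strategy confining the robber on $G_3$ with $m=ccn(G_3)$ cops, we want to confine on $G_1$ with $m$ cops. Since $G_1$ is an induced subgraph of $G_3$ and moreover a retract, and since on $G_1$ the robber has strictly fewer vertices and edges available, a cleaner route is: the robber on $G_1$ can be viewed as a robber on $G_3$ that simply never uses $z$; the cops on $G_1$ play the $G_3$-confining strategy, treating "cop on $z$" as "cop on $x$" (legal since $N_{G_3}(z)=N_{G_3}(x)$ implies the same vertices get dominated), and when the $G_3$-strategy would confine the robber, the same cop positions confine it in $G_1$ because $G_1$ has no extra neighbours. This gives $ccn(G_1)\le ccn(G_3)$, completing the equality.

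The step I expect to be the main obstacle is the careful bookkeeping in part (b)'s reverse inequality, specifically making sure the translation "cop on $z\leftrightarrow$ cop on $x$" is consistent across a full play and that it genuinely produces a confining position rather than just preventing capture — i.e., one must verify that when the robber is confined to a single vertex $v$ in $G_3$ under the simulated positions, it is also confined (not merely un-captured) in $G_1$. Since $G_1=G_3-z$ as an induced subgraph, $N_{G_1}(v)\subseteq N_{G_3}(v)$, so a cop arrangement dominating all of $N_{G_3}(v)$ certainly dominates all of $N_{G_1}(v)$; the only care is that the cops' simulated positions must be actual vertices of $G_1$, which is ensured by replacing any occurrence of $z$ by $x$. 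A secondary subtlety worth a sentence is confirming that the false-twin retract $G_3\to G_1$ is well-defined as a graph homomorphism onto $G_1$ (the image of the non-edge $xz$ is not an edge of $G_1$, which is exactly what we need). I would also remark, as in the Remark following Proposition \ref{prop: twin-cn-cograph}, that unlike the cop number the confining cop number is insensitive to false twins here precisely because of Proposition \ref{prop: c(G)-ccn(G)} together with cographs being cop-bounded by two — but in fact the retract argument gives it directly without invoking the bound.
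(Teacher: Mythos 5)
Your overall plan (shadowing/simulation between the twin and its partner) is the right family of ideas, but in both parts the inequality you actually argue points the wrong way, and in each case the direction you write out is either false or the hard one. In part (a) your ``concrete'' argument lifts a confining cop strategy from $G_1$ to $G_2$, which would prove $ccn(G_2)\le ccn(G_1)$ --- not the claimed $ccn(G_1)\le ccn(G_2)$, and in fact a false statement: by Theorem~\ref{thm: cograph- ccn=2} there is an $8$-vertex cograph with confining cop number $2$ built by twin operations from graphs with confining cop number $1$, and by part (b) the offending step must be a true-twin addition. The precise error is your claim that ``the cop arrangement that confined the copy at $x$ also confines $y$'': confining a robber at $x$ in $G_1$ only requires dominating $N_{G_1}(x)$, not $x$ itself (the cop may sit at a vertex $w$ with $N(x)\subseteq N(w)$ and $d(x,w)=2$, as in Definition~\ref{def: conf.cop.corner.}), so in $G_2$ neither $x$ nor its new twin $y$ need be dominated and the robber can shuttle along the new edge $xy$ indefinitely. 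A correct proof must go the other way: either project cop strategies from $G_2$ down to $G_1$ through the fold $y\mapsto x$ (your retract principle, correctly oriented, says the \emph{retract} $G_1$ has $ccn$ at most that of $G_2$), or, as the paper does, lift the \emph{robber's} evasion strategy from $G_1$ to $G_2$ and use $ccn\le C\le 2$ for cographs.

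In part (b) you in effect prove the easy inequality $ccn(G_1)\le ccn(G_3)$ twice: once by the projection you describe explicitly, and once mislabeled as ``$ccn(G_3)\le ccn(G_1)$ by the same retract principle'' (a retraction $G_3\to G_1$ bounds the $ccn$ of the retract $G_1$ from above by $ccn(G_3)$, not the reverse). The substantive direction $ccn(G_3)\le ccn(G_1)$ is never established. It requires lifting a one-cop confining strategy from $G_1$ to $G_3$ (the cop shadows the $G_1$ strategy, reading the robber at $z$ as being at $x$) and then verifying that the terminal position --- a confinement of the robber at some $x'$ by a cop at $y'$ with $d_{G_1}(x',y')=2$ --- is still a confinement in $G_3$; the danger is exactly that $z$ may be a new undominated neighbour of $x'$. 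The paper rules this out by showing that the bad case $z\in N_{G_3}(x')\setminus N_{G_3}(y')$ forces an induced $P_4$ through $z$, $x'$, $y'$ and the twin $x$. Your proposal never invokes $P_4$-freeness at all in this direction, which is a strong signal that the essential content is missing: the corresponding monotonicity for the ordinary cop number already fails under a false-twin addition ($P_3\to C_4$, see the remark after Proposition~\ref{prop: twin-cn-cograph}), so some structural input from $G_1$ being a cograph cannot be avoided here.
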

\begin{proof}
\textbf{(a) }It suffices to consider the case where $ccn(G_1)=2$ so that the robber has a strategy $\mathscr{S}$ against one cop on $G_1$ to avoid confinement. Then, the robber can mimic $\mathscr{S}$ on $G_2$, as shown in the proof of Proposition \ref{prop: twin-cn-cograph}\@(b), to avoid confinement by one cop on $G_2$. Therefore, $ccn(G_2)=2$ when $ccn(G_1)=2$. \textbf{(b) }Likewise the proof of (a), one can easily see that $ccn(G_3)=2$ whenever $ccn(G_1)=2$. Hence, in any case we have $ccn(G_1)\le ccn(G_3)$. Therefore, to complete the proof,  we assume $ccn(G_1)=1$ and $ccn(G_3)=2$, and show that these assumptions together give rise to a contradiction. To this end, consider a fixed strategy $\mathscr{S}'$ for one cop leading to confining or capturing the robber on $G_1$. Then, in the game of cops and robbers on $G_3$ with one cop, move the cop within $V(G_1)$ by using the following strategy shadowing $\mathscr{S}'$: If the robber moves to or from $z$, follow $\mathscr{S}'$ pretending that the robber has moved to or from $x$. Eventually, the game will reach a situation corresponding to confining or capturing the robber on $G_1$. The latter case, in turn, corresponds to the capture of the robber on $G_3$ unless the cop and the robber on $G_3$ are located at $x$ and $z$, respectively, in which case the robber is confined by the cop. Hence, we may assume the game on $G_3$ has reached a situation corresponding to the confinement of the robber on $G_1$ with the robber and the cop positioned at vertices, say, $x'$ and $y'$ with $d_{G_1}(x',y')=2$. Pick a vertex $z'\in N_{G_1}(x')\cap N_{G_1}(y')$. Keep in mind that $z\not\in \{x',y',z'\}$. If the position of the robber in the actual game (i.e. the game on $G_3$) is not $x'$, then it has to be $z$, in which case $x'=x$ and $ N_{G_3}(y')=N_{G_1}(y')$. Then, in light of the latter one gets 
\begin{equation}\label{eq 1: thm: twin-cn-cograph} 
N_{G_3}(y')\supseteq  N_{G_1}(x')=N_{G_3}(x')=N_{G_3}(z);
\end{equation}
consequently, in the game on $G_3$ the cop (at $y'$) has also confined the robber (at $z$). (See Figure \ref{eq 1: thm: twin-cn-cograph} for an illustration.) 
\begin{center}
 \includegraphics[width=0.6\linewidth]{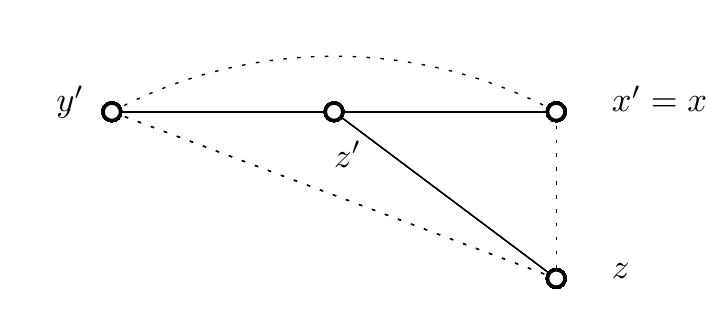}
 \captionof{figure}{An illustration of the situation leading to \eqref{eq 1: thm: twin-cn-cograph}} \label{pic: 1 prop: cograph- ccn=2}
\end{center}
Therefore, it only remains to deal with the case where the robber in the actual game is also positioned is at $x'$. If, in addition, one has $z\not\in N_{G_3}(x')$ or $z\in N_{G_3}(x')\cap N_{G_3}(y')$, then $N_{G_3}(x')\supseteq N_{G_3}(y')$, implying that the robber has been confined on $G_3$. Hence we may assume

\begin{equation}\label{eq: 1 thm: cographs-CR}
z\in N_{G_3}(x')\setminus N_{G_3}(y').
\end{equation} 
To complete the proof, we will show that this assumption leads to a contradiction. (See Figure \ref{pic: 2 prop: cograph- ccn=2} for an illustration of the following argument.) To this end, first note that considering $G_3[\{x',y',z',z\}]$ gives
\begin{equation}\label{eq: 2 thm: cographs-CR}
zz'\in E(G_3),
\end{equation}
since $G_3$ is $P_4$-free. As a result, we also have
\begin{equation}\label{eq: 2+1 thm: cographs-CR}
xx',xz'\in E(G_3)
\end{equation}
since $x$ and $z$ are twins in $G_3$. Note that by \eqref{eq: 1 thm: cographs-CR} and \eqref{eq: 2 thm: cographs-CR} we have $x'\not=x$ and $z'\not=x$. Furthermore, since $x'\in N_{G_3}(z)\setminus N_{G_3}(y')$ and vertices $z$ and $x$ are false twins in $G_3$, we also have $y'\not= x$. Hence, we have 
\begin{equation*}\label{eq: 3 thm: cographs-CR}
x\not\in \{x',y',z'\}.
\end{equation*}
Moreover, since $N_{G_1}(y')\supset N_{G_1}(x')$ we have
\begin{equation}\label{eq: 5 thm: cographs-CR}
xy'\in E(G_3).
\end{equation} 
\begin{center}
 \includegraphics[width=0.6\linewidth]{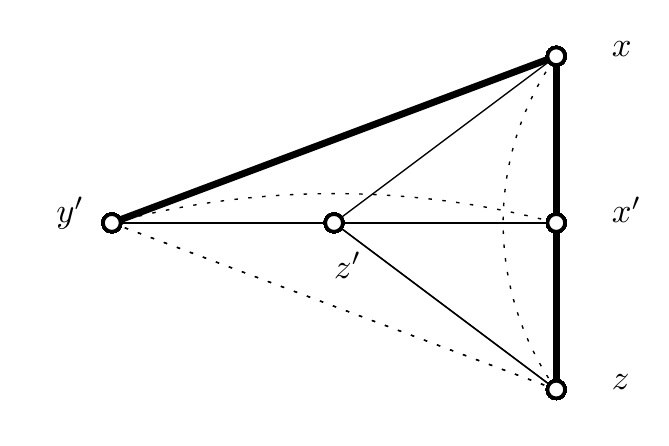}
 \captionof{figure}{An illustration of the contradiction arisen from assuming \eqref{eq: 1 thm: cographs-CR}} \label{pic: 2 prop: cograph- ccn=2}
\end{center}
Finally, in light of \eqref{eq: 1 thm: cographs-CR}, \eqref{eq: 2+1 thm: cographs-CR}, and \eqref{eq: 5 thm: cographs-CR} one gets $G_3[\{z,x',y,y'\}]\cong P_4$, a contradiction.
\end{proof}

\begin{corollary}\label{corr: thm: twin-cn-cograph} 
If $G$ is a cograph with $ccn(G)=2$, then for every graph $H$ obtained from $G$ by a sequence of twin operations one has $ccn(H)=2$.
\end{corollary}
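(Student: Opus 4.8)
The plan is to induct on the number $m$ of twin operations in a sequence $G = H_0, H_1, \dots, H_m = H$ producing $H$ from $G$, using Theorem~\ref{thm: twin-cn-cograph} for the inductive step and the universal upper bound $ccn \le 2$ for cographs to pin down the exact value. Before starting, I would record two preliminary facts. First, twin operations preserve cographhood, so every $H_i$ in the sequence is a cograph: we may assume $G$ is connected (otherwise pass to the component on which the robber evades confinement), whence $G$ is obtained from $K_2$ by twin operations by Theorem~\ref{thm: cograph-twin}; prepending this sequence to $H_0, \dots, H_i$ exhibits each $H_i$ as obtained from $K_2$ by twin operations, and since each such $H_i$ is connected and nontrivial, Theorem~\ref{thm: cograph-twin} gives that $H_i$ is a cograph. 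Second, for any cograph $H_i$ one has $tcn(H_i) = 1$ by Theorem~\ref{thm: P-k-free-tcn}, hence $ccn(H_i) \le tcn(H_i) + 1 = 2$ by Proposition~\ref{prop: c(G)-ccn(G)}. So the entire content of the corollary is the reverse inequality $ccn(H) \ge 2$.

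I would then show $ccn(H_i) = 2$ for every $i$ by induction on $i$. The case $i = 0$ is the hypothesis $ccn(G) = 2$. For the inductive step, $H_{i+1}$ is obtained from the cograph $H_i$ --- which satisfies $ccn(H_i) = 2$ by the induction hypothesis --- by adding a twin $y$ of some vertex $x \in V(H_i)$. If $y$ is a false twin of $x$, then Theorem~\ref{thm: twin-cn-cograph}(b) gives $ccn(H_{i+1}) = ccn(H_i) = 2$ directly. If $y$ is a true twin of $x$, then Theorem~\ref{thm: twin-cn-cograph}(a) gives $ccn(H_{i+1}) \ge ccn(H_i) = 2$, and together with $ccn(H_{i+1}) \le 2$ from the second preliminary fact this yields $ccn(H_{i+1}) = 2$. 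Taking $i = m$ finishes the proof.

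I do not anticipate a genuine obstacle, because the delicate $P_4$-free case analysis has already been carried out inside Theorem~\ref{thm: twin-cn-cograph}. The only point requiring a little care is the bookkeeping that each intermediate graph $H_i$ remains a (connected) cograph, so that both the bound $ccn(H_i) \le 2$ and the applicability of Theorem~\ref{thm: twin-cn-cograph} at $H_i$ are legitimate at every stage; this is precisely what the first preliminary fact secures.
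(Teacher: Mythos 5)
Your proof is correct and takes essentially the same route as the paper, which states this corollary without proof as an immediate consequence of Theorem~\ref{thm: twin-cn-cograph}: induct along the sequence of twin operations, using part (b) for false twins and part (a) combined with the universal bound $ccn\le 2$ for cographs (from Theorem~\ref{thm: P-k-free-tcn} and Proposition~\ref{prop: c(G)-ccn(G)}) for true twins. Your bookkeeping that each intermediate graph remains a cograph is a reasonable point to make explicit, and your argument for it is sound.
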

Adding a true twin vertex to a cograph can indeed increase the confining cop number. This claim, according to Theorem \ref{thm: cograph-twin} and Theorem \ref{thm: twin-cn-cograph}, is equivalent to the statement that there exists a cograph $G$ with $ccn(G)=2$. We shall show that the smallest order of such a graph is eight:

\begin{theorem}\label{thm: cograph- ccn=2}
The confining cop number of every connected cograph on fewer than 8 vertices is equal to one. Moreover, for every $n\ge 8$ there is a connected cograph $G$ on $n$ vertices such that $ccn(G)=2$. 
\end{theorem}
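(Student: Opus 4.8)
The plan is to split the statement into its two halves: a lower-bound computation showing no connected cograph on at most $7$ vertices can have $ccn = 2$, and an explicit construction of a cograph on every $n \ge 8$ vertices with $ccn = 2$. For the second half I would first exhibit a single cograph $G_0$ on exactly $8$ vertices with $ccn(G_0) = 2$, and then invoke Corollary \ref{corr: thm: twin-cn-cograph}: adding twin vertices (say, repeatedly adding a true twin of a fixed vertex) keeps us inside the class of cographs and preserves $ccn = 2$, so padding $G_0$ up to any order $n \ge 8$ is immediate. The natural candidate for $G_0$ is the complement of the cube $Q_3$, equivalently the cograph built as $K_2[\overline{K_2}, \overline{K_2}] \cup \dots$; more usefully, a promising explicit choice is $G_0 = \overline{C_8}$ or the ``cocktail-party-like'' cograph obtained by joining two copies of $2K_2$ — I would check which of these is genuinely $P_4$-free and genuinely has $ccn=2$ by finding an explicit robber strategy against one cop that, from every cop position, always leaves the robber a neighbor to escape to that is a non-neighbor of the cop's current vertex; the relevant quantity to verify is that for every vertex pair $(c,r)$ at distance $\le 2$ there is $r' \in N(r)\setminus N[c]$, which rules out confinement, together with a standard argument that $G_0$ is not copwin (it contains an induced $C_4$ and has no corner after any partial dismantling).

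For the lower-bound half, suppose $G$ is a connected cograph with $ccn(G) = 2$ and $|V(G)| \le 7$; I want a contradiction. Since $tcn(G) = 1$ for every cograph (by Theorem \ref{thm: P-k-free-tcn}), Proposition \ref{prop: c(G)-ccn(G)} forces $C(G) = 2$ and $ccn(G) = 2$ exactly; in particular $G$ is not copwin, hence not dismantlable, so $G$ has no confined corner and more strongly no vertex $v$ with $N[v] \subseteq N[w]$ for some $w \ne v$. A non-copwin graph must contain an induced $C_4$ (the smallest non-dismantlable graph, since any cograph with no induced $C_4$ is a cluster/complete-multipartite-type graph that is easily dismantlable — in fact $P_4$-free and $C_4$-free graphs are exactly the ``trivially perfect'' graphs, which are dismantlable). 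So $G$ contains an induced $C_4$ on four vertices, leaving at most $3$ more vertices to place. I would then run a short case analysis on how the remaining $\le 3$ vertices attach to the $C_4$, using $P_4$-freeness to severely constrain the adjacencies (any vertex outside an induced $C_4 = abcd$ that is adjacent to exactly one, or to two non-consecutive, vertices of the $C_4$ creates an induced $P_4$), and in each surviving case produce a dismantling order (a vertex whose closed neighborhood is dominated), contradicting non-copwinness. The cleanest route is probably: show every cograph on $\le 7$ vertices with no induced $P_4$ and no dominated vertex is impossible by arguing about the top-level join/union decomposition of the cograph (a connected cograph is a join $G_1 * G_2$; a corner-free connected cograph forces both $G_i$ to be ``large'' in a way that already costs $\ge 4$ vertices on each side, giving $\ge 8$).

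I expect the main obstacle to be the lower bound, specifically making the case analysis on $\le 7$ vertices genuinely exhaustive rather than hand-wavy: cographs are flexible enough that one must be careful to cover all the ways three extra vertices can hang off an induced $C_4$, including the subtle cases where extra vertices are adjacent to all four $C_4$-vertices (which is allowed and does not create a $P_4$). The cleanest resolution, which I would pursue first, is the join-decomposition argument: write the connected cograph $G$ as $G = A * B$ (join), observe that if either side is a single vertex $v$ then $v$ is a true twin candidate / dominated vertex unless the other side is itself non-copwin, and recurse; tracking the induced $C_4$ through this recursion and counting vertices should force $|V(G)| \ge 8$ cleanly, and as a bonus it will pinpoint the extremal examples on $8$ vertices (they will look like $\overline{K_2} * \overline{K_2} * (\text{something})$ decompositions), which feeds directly into the construction needed for the second half. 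The routine verification that the chosen $G_0$ on $8$ vertices actually has $ccn(G_0) = 2$ — exhibiting the explicit robber evasion and the non-dismantlability — I would state and leave as a direct check.
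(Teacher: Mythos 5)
Your overall architecture matches the paper's for the second half: exhibit one $8$-vertex cograph with confining cop number two and pad it to every $n\ge 8$ via Corollary \ref{corr: thm: twin-cn-cograph}; the correct candidate among the three you float is the join of two copies of $2K_2$, which is exactly the paper's extremal graph (and, as the paper notes, the unique one on eight vertices). The other two candidates are not cographs: $Q_3$ and $C_8$ both contain induced $P_4$'s, and $\overline{P_4}\cong P_4$, so $\overline{Q_3}$ and $\overline{C_8}$ contain them too. More importantly, your proposed verification test for the surviving candidate is wrong as stated: in the join of two copies of $2K_2$ the two vertices of a $K_2$-factor are true twins, so for such a pair $(c,r)$ at distance one we get $N(r)\setminus N[c]=\varnothing$, and the ``check all pairs at distance $\le 2$'' criterion rejects the correct graph. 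What is actually needed is a robber strategy (for instance, the invariant that the cop and the robber always occupy the two different $K_2$'s on the same side of the join), under which the bad pairs are never reached; a static pair check does not suffice.

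For the lower bound you take a genuinely different route from the paper. The paper stays inside its $\mathscr{G}_{4,c}$ machinery: Proposition \ref{prop: G-k,c} forces a specific six-vertex induced subgraph ($v_1,v_2$ together with $M_1=\{u,u'\}$ and $M_2=\{w,w'\}$, $M_1\Leftrightarrow M_2$), and a three-case analysis of the seven-vertex extensions finishes the job. Your join-decomposition idea is cleaner and does work, but the step carrying all the weight --- that both join factors must have at least four vertices --- is asserted rather than proved, and the reason you give (corner-freeness) is not the right mechanism; note also that your claim ``not dismantlable, hence no confined corner'' is false in general ($C_4$ has confined corners and is not dismantlable). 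The correct mechanism is confinement: write the connected cograph as $G=A\ast B$ (possible since $\overline{G}$ is disconnected, by complement reducibility). If $|A|\le 3$, then $A$ has a vertex $u$ such that every $x\in A\setminus N_A[u]$ satisfies $N_A(x)\subseteq N_A(u)$ (check the handful of cographs on at most three vertices: either some vertex dominates $A$, or a suitable $u$ can be chosen so that each of its non-neighbours is isolated in $A$). A single cop placed at $u$ then leaves the robber only starting vertices $x\in A\setminus N_A[u]$, and for each of these $N_G(x)=N_A(x)\cup V(B)\subseteq N_A(u)\cup V(B)=N_G(u)$, so the robber is captured or confined immediately and $ccn(G)=1$. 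Hence $|A|,|B|\ge 4$ and $|V(G)|\ge 8$, with no recursion and no tracking of an induced $C_4$. So your plan is completable, and arguably tidier than the paper's case analysis, but as written both the decisive counting claim and the verification of the example have real gaps.
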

\begin{proof}
Let $G$ be a graph in $\mathscr{G}_{4,c}$ with the minimum number of vertices. By Proposition \ref{prop: G-k,c}\@(c), one has $|V(G)|\ge 6$. Indeed, by Proposition \ref{prop: G-k,c} and in accordance with its notations, $G$ must have the graph $G_1$ of Figure \ref{pic: 1 thm: cograph- ccn=2} as an induced subgraph. 

\begin{center}
 \includegraphics[width=0.65\linewidth]{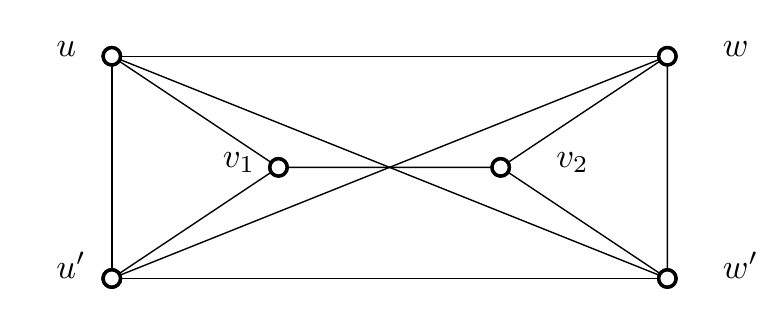}
 \captionof{figure}{The induced subgraph $G_1$ of $G$ in the Proof of Theorem \ref{thm: cograph- ccn=2}} \label{pic: 1 thm: cograph- ccn=2}
\end{center}
As such, if $|V(G)|=6$, one has $G=G_1$, in which case placing a cop at $w$ in the first step of the game forces the robber to chose $v_1$ as its first position, at which vertex the robber is confined; a contradiction. Hence, we have $$ |V(G)|\ge 7.$$ Next, we will show that $| V(G)|\ge 8 $. To this end, we show that each of the following three cases gives rise to a contradiction: 
\begin{description}
\item[Case 1:] $|V(G)|= 7$ and $|M_1|=3$.
\item[Case 2:] $|V(G)|= 7$, $|M_1|=2$, and $\deg_G(v_1)=4$.
\item[Case 3:] $|V(G)|= 7$ and $|M_2|=3$.
\end{description}
\underline{Case I:} In this case one can easily examine that placing a cop at $w_1$ leads to the confinement or capture of the robber, hence, $ccn(G)=1$; a contradiction.\\[3pt]
\noindent\underline{Case II:} Let $\{x\}=N_G(v_1)\setminus (M_1\cup \{v_2\})$. Since $x\not\in M_1$, we have $x\in N_G(v_2)$. If $x$ is adjacent to a vertex in $M_1$ (resp. $M_2$), placing a cop at that vertex leads to either the confinement of the robber at $v_2$ (resp. $v_1$) in step 1 or the capture of the robber in step 2; a contradiction. Hence, $N_G(x)=\{v_1,v_2\}$. But then the the graphs $G[\{u,w,v_2,x\}]$ will be a $P_4$; contradicting the assumption that $G$ is $P_4$-free. (See Figure \ref{pic: 2 thm: cograph- ccn=2}.)
\begin{center}
 \includegraphics[width=0.56\linewidth]{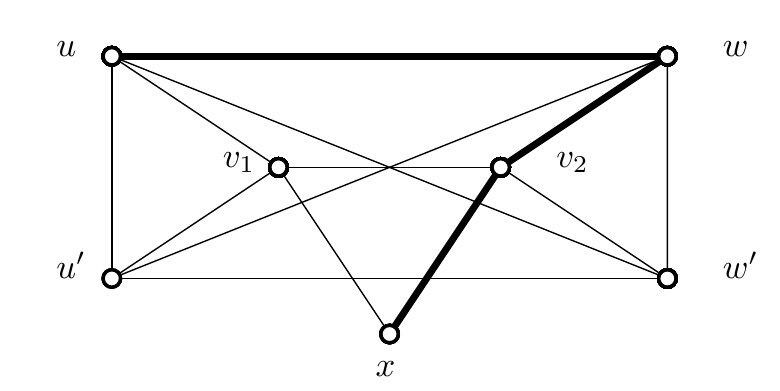}
 \captionof{figure}{Proof of Theorem \ref{thm: cograph- ccn=2}: An induced $P_4$ under Case II} \label{pic: 2 thm: cograph- ccn=2}
\end{center}
\underline{Case III:} This case also leads to a contradiction; likewise Case I.\\[3pt]
Hence, $|V(G)\ge 8$. Therefore, in light of Corollary \ref{corr: thm: twin-cn-cograph} , to complete the proof it suffices to present a cograph of $H$ of order eight so  that $ccn(H)=2$. As one can easily check, the graph of Figure \ref{pic: 3 thm: cograph- ccn=2} satisfies these conditions; indeed, it is the only cograph of order eight with the confining cop number of two. 
\begin{center}
 \includegraphics[width=0.55\linewidth]{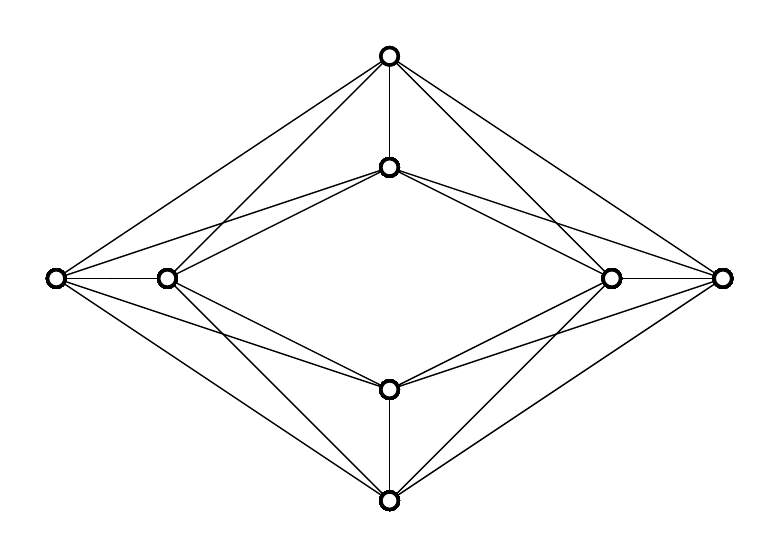}
 \captionof{figure}{The smallest cograph with the confining cop number of two} \label{pic: 3 thm: cograph- ccn=2}
\end{center}
\end{proof}

\section{Concluding remarks}

Since the cop number of the cycle on four vertices is two, the upper bound of two for the cop number of cographs is tight. However, it is an open question whether there exists a $P_5$-free graph which requires three cops to capture the robber. As shown in Corollary \ref{cor: -1 prop: G-k,c}, though, the planarity of a connected $P_5$-free graph $G$ implies $ccn(G)\le 2$. We conjecture that this planarity condition can be relaxed:\\[3pt]

\noindent\textbf{Conjecture.} {\em For every connected $P_5$-free graph $G$, one has $ccn(G)\le 2$.}\\

We conclude by another conjecture about the planar graphs. In light of Propositions \ref{prop: c(G)-ccn(G)} and \ref{prop: minimum_degree_confining_cop_number} and the fact that planar graphs are 3-copwin, one can easily see that the dedecahedral graph has its cop-number and confining cop-number both equal to three. It has been conjectured that the dodecahedral graph (which has 20 vertices) is the smallest planar graph with cop-number three. Here, we pose the counterpart of this conjecture in terms of the confining cop number:\\

\noindent\textbf{Conjecture.} {\em For every connected planar graph $G$ on at most 19 vertices one has $ccn(G)\le 2$.



\begin{thebibliography}{1}

\bibitem{Aigner}
Martin Aigner and Michael Fromme.
\newblock A game of cops and robbers.
\newblock {\em Discrete Applied Mathematics}, 8(1):1--12, 1984.

\bibitem{baird2014minimum}
William Baird, Andrew Beveridge, Anthony Bonato, Paolo Codenotti, Aaron Maurer,
  John McCauley, and Silviya Valeva.
\newblock On the minimum order of $ k $-cop win graphs.
\newblock {\em Contributions to Discrete Mathematics}, 9(1), 2014.

\bibitem{chartrand2019chromatic}
Gary Chartrand and Ping Zhang.
\newblock {\em Chromatic graph theory}.
\newblock CRC press, 2019.

\bibitem{corneil1981complement}
Derek~G Corneil, Helmut Lerchs, and L~Stewart Burlingham.
\newblock Complement reducible graphs.
\newblock {\em Discret. Appl. Math.}, 3(3):163--174, 1981.

\bibitem{CDM154}
Gwenaël Joret, Marcin Kamiński, and Dirk Theis.
\newblock The cops and robber game on graphs with forbidden (induced)
  subgraphs.
\newblock {\em Contributions to Discrete Mathematics}, 5(2), 2010.

\bibitem{masjoody2018CMS2}
Masood Masjoody.
\newblock The game of cops and robber on graphs with two forbidden induced
  subgraphs.
\newblock Poster presented at CMS Winter Meeting, 2018.

\bibitem{masjoody2020cops}
Masood Masjoody and Ladislav Stacho.
\newblock Cops and robbers on graphs with a set of forbidden induced subgraphs.
\newblock {\em Theoretical Computer Science}, 839C:186--194, 2020.

\bibitem{yang2015optimal}
Boting Yang and William Hamilton.
\newblock The optimal capture time of the one-cop-moves game.
\newblock {\em Theoretical Computer Science}, 588:96--113, 2015.

\end{thebibliography}



\end{document}